\documentclass[11pt, english]{article}

\usepackage[margin= 2 cm,bottom=19mm, top= 18mm]{geometry}
% margin= 2 cm,bottom=18mm, top= 18mm
% \usepackage{amsmath,color,amsfonts,amsthm}
% \usepackage{amssymb,enumerate,verbatim}
% \usepackage[shortlabels]{enumitem}
% % \usepackage{hyperref}
% \usepackage[bottom,hang,flushmargin]{footmisc}
% \usepackage{a4wide}
% \addtolength{\textwidth}{0.82in}
% \addtolength{\hoffset}{-0.35in}
% \addtolength{\textheight}{0.35in}
% \addtolength{\voffset}{-0.72in}

\usepackage[square,sort,comma,numbers]{natbib}
\usepackage{amsthm}
\usepackage{amsmath}
\usepackage{amssymb}
\usepackage{setspace}
\usepackage{mathtools}
\usepackage{graphicx}
\graphicspath{ {./images/} }
\usepackage[hidelinks]{hyperref}
\usepackage{cleveref}
\usepackage{hyperref}
\usepackage{enumitem}
\usepackage{framed}
\usepackage{subcaption}
\usepackage{xspace}
\usepackage{thmtools} 
\usepackage{thm-restate}

\bibliographystyle{plain}
\setlength{\bibsep}{1 pt plus 20 ex}

\usepackage{thmtools}
\usepackage{thm-restate}

\usepackage{floatrow}

\usepackage{tikz}
\usepackage{mathdots}
\usepackage{xcolor}
\usepackage{diagbox}
\usepackage{colortbl}
\usepackage[absolute,overlay]{textpos}

\usetikzlibrary{calc}
\usetikzlibrary{decorations.pathreplacing}
\usetikzlibrary{positioning,patterns}
\usetikzlibrary{arrows,shapes,positioning}
\usetikzlibrary{decorations.markings}

\tikzstyle{edge}=[very thick]
\definecolor{bostonuniversityred}{rgb}{0.8, 0.0, 0.0}
\definecolor{arsenic}{rgb}{0.23, 0.27, 0.29}
\tikzstyle{diredge}=[postaction={decorate,decoration={markings,
		mark=at position .95 with {\arrow[scale = 1]{stealth};}}}]
\tikzset{
    arrow/.style={decoration={markings, mark=at position 0.7 with
    {\fill(-0.09*#1,-0.03*#1) -- (0,0) -- (-0.09*#1,0.03*#1) -- cycle;}}, postaction={decorate}},
    arrow/.default=1
}
\tikzset{
    arow/.style={decoration={markings, mark=at position 1 with
    {\fill(-0.09*#1,-0.03*#1) -- (0,0) -- (-0.09*#1,0.03*#1) -- cycle;}}, postaction={decorate}},
    arow/.default=1
}
\tikzset{
    arrrow/.style={decoration={markings, mark=at position 0.9 with
    {\fill(-0.09*#1,-0.03*#1) -- (0,0) -- (-0.09*#1,0.03*#1) -- cycle;}}, postaction={decorate}},
    arow/.default=1
}

\newcommand{\fitellipsis}[2] % first and second node names without parentheses
{\draw [fill=white]let \p1=(#1), \p2=(#2), \n1={atan2(\y2-\y1,\x2-\x1)}, \n2={veclen(\y2-\y1,\x2-\x1)}
    in ($ (\p1)!0.5!(\p2) $) ellipse [ x radius=\n2/2+0cm, y radius=1.1cm, rotate=\n1];
}
\newcommand{\Fitellipsis}[2] % first and second node names without parentheses
{\draw [fill=white]let \p1=(#1), \p2=(#2), \n1={atan2(\y2-\y1,\x2-\x1)}, \n2={veclen(\y2-\y1,\x2-\x1)}
    in ($ (\p1)!0.5!(\p2) $) ellipse [ x radius=\n2/2+0cm, y radius=1.4cm, rotate=\n1];
}

\floatsetup{ 
  heightadjust=object,
  valign=c
}

\setlength{\parskip}{0pt}
\setlength{\parindent}{15pt}

\addtolength{\intextsep}{6pt} %space between text and figures
\addtolength{\abovecaptionskip}{10pt}
\addtolength{\belowcaptionskip}{-5pt}
\captionsetup{width=0.8\textwidth, labelfont=bf, parskip=5pt}

\setstretch{1.05}

\theoremstyle{plain}

\newtheorem*{thm*}{Theorem}
\newtheorem{thm}{Theorem}[section]
\Crefname{thm}{Theorem}{Theorems}

\newtheorem*{lem*}{Lemma}
\newtheorem{lem}[thm]{Lemma}
\Crefname{lem}{Lemma}{Lemmas}

\newtheorem*{claim*}{Claim}

\crefname{claim}{Claim}{Claims}
\Crefname{claim}{Claim}{Claims}

\newtheorem{prop}[thm]{Proposition}
\Crefname{prop}{Proposition}{Propositions}

\Crefname{remar}{Remark}{Remarks}

\newtheorem{cor}[thm]{Corollary}
\crefname{cor}{Corollary}{Corollaries}

\newtheorem*{conj*}{Conjecture}
\newtheorem{conj}[thm]{Conjecture}
\crefname{conj}{Conjecture}{Conjectures}

\Crefname{qn}{Question}{Questions}

\newtheorem{obs}[thm]{Observation}
\Crefname{obs}{Observation}{Observations}

\Crefname{ex}{Example}{Examples}

\theoremstyle{definition}

\Crefname{prob}{Problem}{Problems}

\newtheorem{defn}[thm]{Definition}
\Crefname{defn}{Definition}{Definitions}

\theoremstyle{remark}

\captionsetup{width=0.8\textwidth, labelfont=bf, parskip=5pt}

\renewenvironment{proof}[1][]{\begin{trivlist}
\item[\hspace{\labelsep}{\bf\noindent Proof#1.\/}] }{\qed\end{trivlist}}

\newcommand{\remove}[1]{}

\newcommand{\eps}{\varepsilon}

\title{\vspace{-1 cm}
Proof of Grinblat's conjecture on rainbow matchings in multigraphs}
\date{}

\author{
David Munh\'a Correia\thanks{
Department of Mathematics, ETH, Z\"urich, Switzerland. Research supported in part by SNSF grant 200021\_196965.
\newline
\emph{Emails}: \textbf{\{david.munhacanascorreia, benjamin.sudakov\}@math.ethz.ch}.
}
\and
Benny Sudakov\footnotemark[1]}

\begin{document} 
\maketitle
\begin{abstract}
Many well-known problems in Combinatorics can be reduced to finding a large rainbow structure in a certain edge-coloured multigraph. Two celebrated examples of this are Ringel's tree packing conjecture and Ryser's conjecture on transversals in Latin squares. In this paper, we answer such a question raised by Grinblat twenty years ago. Let an \textit{$(n,v)$-multigraph} be an $n$-edge-coloured multigraph in which the edges of each colour span a disjoint union of non-trivial cliques that have in total at least $v$ vertices. Grinblat conjectured that for all $n \geq 4$, every $(n,3n-2)$-multigraph contains a rainbow matching of size $n$. Here, we prove this conjecture for all sufficiently large $n$.
\end{abstract}
\section{Introduction}
A \emph{rainbow copy} of a graph $H$ in an edge-coloured graph $G$ is a subgraph of $G$ isomorphic to $H$ whose edges have different colours. 
There are many well-known problems in Combinatorics that can be reduced to finding a large rainbow structure in a certain edge-coloured multigraph.
One example of this is the famous conjecture of Ringel \cite{ringel1963theory} from 1963 stating that the edges of the complete graph $K_{2n+1}$ can be decomposed by copies of any tree on $n$ vertices. K\"{o}tzig \cite{rosa1966certain} noticed that this can be reduced to showing that a certain edge-colouring of $K_{2n+1}$ contains a rainbow copy of any tree on $n$ vertices. Recently, this problem and some other questions about finding rainbow trees were resolved in \cite{MPS2020, montgomery2020proof}. 

Another celebrated problem in the area involves transversals in Latin squares, the study of which dates back to the work of Euler in the 1700s. A Latin square of order $n$ is an $n \times n$ array filled with $n$ symbols such that no symbol appears more than once in a row or column and a transversal is a set of entries such that no two of them have a symbol, row or column in common. The Ryser-Brualdi-Stein conjecture (\cite{brualdi1991combinatorial, hall1955complete, ryser1967neuere, stein1975transversals}) states that every Latin square contains a transversal using all but at most one symbol. It is not difficult to see that a Latin square of order $n$ is actually equivalent to a proper $n$-edge-colouring of the complete bipartite graph $K_{n,n}$ and a transversal is now a rainbow matching in this graph. Thus, the conjecture states that there is always a rainbow matching of size $n-1$ in such a graph. Although this still remains open, the problem has atracted a lot of attention over the last 50 years (see, e.g., \cite{keevash2020new} and its references). Improving previous bounds from \cite{hatami2008lower, woolbright1978n}, the best known result towards this conjecture was recently obtained by Keevash, Pokrovskiy, Sudakov and Yepremyan \cite{keevash2020new} who showed that there is always a rainbow matching of size $n - O \left(\frac{\log n}{\log \log n} \right)$. 

There are now many variants and generalisations of the Ryser-Brualdi-Stein conjecture. One of these is the Aharoni-Berger conjecture \cite{aharoni2009rainbow}, which states that every edge-coloured bipartite multigraph with $n$ colours, each consisting of a matching of size $n+1$, contains a rainbow matching using all the colours. Note that indeed this implies the Ryser-Brualdi-Stein conjecture since any properly $n$-edge-coloured $K_{n,n}$ can be transformed into such a graph by adding to it a disjoint edge repeated in each one of the $n$ colours. This problem has been extensively studied (see, e.g., \cite{aharoni2015generalization, aharoni2017representation, clemens2015improved, kotlar2014large, pokrovskiy2015rainbow}) and the conjecture was shown to hold asymptotically in \cite{pokrovskiy2018approximate} (see also \cite{CPS2021+} for a very short proof).  

In this paper, we will consider another open problem in this area. An \textit{$(n,v)$-multigraph} is an $n$-edge-coloured multigraph in which each \emph{colour class}, i.e., the graph formed by the edges of each colour, forms a disjoint union of non-trivial cliques that in total have at least $v$ vertices. These can be seen as a generalisation of the type of edge-coloured multigraphs mentioned earlier. Indeed, note that the Aharoni-Berger conjecture is equivalent to the statement that every bipartite $(n,2n+2)$-multigraph contains a rainbow matching of size $n$. Twenty years ago, Grinblat \cite{grinblat2002algebras} raised the question of how large $v$ should be so that every $(n,v)$-multigraph contains a rainbow matching using all the colours. In fact, initially, Grinblat's question was formulated as a measure-theoretic problem in the context of his work on algebras of sets, in which he looked at sufficient conditions for a family of algebras over a set $X$ to cover the whole power set $\mathcal{P}(X)$. His question was later reformulated as a graph theoretic problem and gained attention of combinatorialists.

Let us note first that if $v \geq 4n-3$ we can greedily find a rainbow matching of size $n$. Indeed, given a rainbow matching $M$ of size at most $n-1$, if any colour not used in it has at least $4n-3$ vertices in its colour class, which is a disjoint union of non-trivial cliques, then note that it must have an edge outside $M$ and thus, we can add it and get a larger rainbow matching. Grinblat \cite{grinblat2002algebras} first showed that $v$ should be larger than $3n-3$ and is at most $4n - \lfloor \frac{n+3}{2} \rfloor$. The lower bound follows by considering a disjoint union of $n-1$ triangles, whose edges are repeated in each one of the $n$ colours. Note that a matching cannot have more than one edge from each triangle and so, there is no matching of size $n$. For the smaller values of $n=2,3$, one can do slightly better. E.g., for $n=3$, consider two disjoint copies of a proper edge-colouring of $K_4$ with three colours. Nevertheless, Grinblat \cite{grinblat2002algebras} conjectured  the following.
\begin{conj}
For all $n \geq 4$, every $(n,3n-2)$-multigraph contains a rainbow matching of size $n$.
\end{conj}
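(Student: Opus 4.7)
The plan is to argue by contradiction. Suppose $G$ is an $(n, 3n-2)$-multigraph with no rainbow matching of size $n$, and let $M$ be a maximum rainbow matching in $G$, with $|M| = m \leq n-1$. Let $U$ denote the set of colours not used by $M$, so $|U| = n-m \geq 1$. Maximality already forces every edge of colour $c \in U$ to be incident to $V(M)$; in particular, every $c$-clique meets $V(M)$. Since $|V(M)| = 2m \leq 2n-2$ and the cliques of colour $c$ span at least $3n-2$ vertices, each unused colour has at least $(3n-2) - (2n-2) = n$ \emph{free} vertices lying in its cliques, each joined to some vertex of $V(M)$ by a $c$-edge. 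So plenty of potential swaps are available, and the task is to stitch them into a genuine augmentation.

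Next I would set up an augmentation mechanism. Given an unused colour $c \in U$, a free vertex $u$ in a $c$-clique with $M$-neighbour $v$, and the $M$-edge $vw$ at $v$ of colour $c'$, the swap $M' = (M \setminus \{vw\}) \cup \{uv\}$ trades $c$ for $c'$ in the set of used colours. Iterating such swaps, and occasionally using a richer move — e.g., whenever an unused colour has a clique containing two $M$-vertices $v_1, v_2$, we may splice in the edge $v_1v_2$ and release \emph{two} matching edges, giving bilateral freedom — produces an alternating tree of modified rainbow matchings rooted at $V(M)$. Either some branch closes off to give a rainbow matching of size $n$ (done), or the tree stabilises, in which case the structural constraints imposed on $G$ are very stringent.

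In the latter case the proof would enter a stability phase. The tight extremal construction for the bound $v = 3n-3$ is $n-1$ vertex-disjoint triangles, each replicated in all $n$ colours. I would show that a stable augmentation tree essentially forces this pattern on $G$: each $M$-edge must sit inside a triangle in nearly every colour, each free vertex of each unused colour must close such a triangle with an $M$-edge, and these triangles must fit together globally as in the extremal example. Once this near-rigidity is established, totalling vertices across all $n$ colours and invoking the hypothesis $v \geq 3n-2 > 3n-3$ yields the desired contradiction, since even one extra vertex per colour cannot be accommodated by the extremal triangle packing.

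The principal obstacle is the stability step, namely extracting the rigid triangular structure from the mere failure of augmentation. This will require careful bookkeeping of which cliques can share $M$-vertices, coupled with a parity/counting argument that localises deviations from the extremal picture. The boundary case $m = n-1$ with a single unused colour is particularly delicate: the $n$ free vertices of that colour must be deployed nearly optimally, and the swap tree is short, so one must squeeze information out of very few moves. I also expect a technical large-$n$ overhead from absorbing small deviations, which is the natural source of the "sufficiently large $n$" hypothesis in the main theorem.
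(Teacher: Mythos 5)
Your outline correctly identifies the starting point (take a maximum rainbow matching $M$, count that each unused colour has at least $n$ free vertices incident to $V(M)$) and the right extremal picture (disjoint triangles replicated in all colours). However, there is a genuine gap: the plan hinges on a vague ``augmentation tree stabilises $\Rightarrow$ rigidity'' step that, as sketched, will not close. Generic alternating-path/swap-tree arguments in this setting bottom out well above the tight threshold; the paper itself remarks that even after developing the much stronger machinery of auxiliary matchings (matchings of edges each repeated in many unused colours), a direct double-counting only reaches $3n-1$, and reaching $3n-2$ requires one further idea (the secondary matching $L=\{v_e m(x_e): e\in M_N\}\cup(M\setminus M_N)$ and the observation that if $v_e m(x_e)$ is a $c$-horn then no $L_{c'}$-edge can touch $v_e m(x_e)$ for the many colours $c'$ repeated in that pair). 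Nothing in your ``stability phase'' plays that role.

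More fundamentally, you never isolate edge multiplicity, which is the organising dichotomy of the actual proof. When the augmentation tree ``stabilises,'' what one actually extracts is that certain pairs $v_e m(x_e)$ are repeated in a large fraction of the unused colours, i.e.\ the graph has edges of near-maximal multiplicity. The paper turns this into a clean split: (i) if all multiplicities are below $(1-\delta)n$ one proves a near-full rainbow matching (via auxiliary matchings plus a bound on the set $N_\alpha$ of low-multiplicity auxiliary edges) and then boosts it to a \emph{full} rainbow matching by a random-sampling ``reserve colour'' trick; (ii) if there are $r$ disjoint high-multiplicity edges, delete them, note that $\geq 0.9n$ colours survive with $\geq 3(n-r)-2$ vertices, find a rainbow $(n-r)$-matching avoiding the other colours, and re-insert the deleted edges. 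Your proposal has no mechanism for either half: no bounded-multiplicity theorem, no sampling to pass from $n-r$ to $n$, and no deletion/re-insertion step. The ``boundary case $m=n-1$'' you flag as delicate is, in the paper, simply absorbed by proving a size-$(n-r)$ matching and then finishing with the reserved structure, rather than being attacked head-on. As written, your plan is a reasonable heuristic sketch but does not contain the ideas needed to get the exact constant $-2$.
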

\noindent In \cite{grinblat2015families}, he subsequently improved the upper bound to $10n/3 + \sqrt{2n/3}$. Later, Nivasch and Omri \cite{nivasch2017rainbow} lowered it to $16n/5 + O(1)$. Finally, Clemens, Ehrenm\"{u}ller and Pokrovskiy \cite{clemens2017sets} gave the first asymptotic proof of the conjecture showing that there is a constant $c$ such that every $ \left(n,3n+c \sqrt{n} \right)$-multigraph has a rainbow matching of size $n$. In this paper, we resolve Grinblat's conjecture for all sufficiently large $n$.
\begin{thm}\label{mainthm}
There is $n_0$ such that for all $n \geq n_0$, every $(n,3n-2)$-multigraph contains a rainbow matching of size $n$.
\end{thm}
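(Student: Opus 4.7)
I argue by contradiction. Assume $G$ is an $(n,3n-2)$-multigraph with no rainbow matching of size $n$ and let $M$ be a maximum rainbow matching, of size $k\le n-1$; I focus on the critical case $k=n-1$, since smaller $k$ leaves more missing colours to play with. Let $c_0$ be a missing colour with clique partition $V_{c_0}=K_1\sqcup\cdots\sqcup K_t$, where $\sum|K_i|\ge 3n-2$ and each $|K_i|\ge 2$. The first observation is the standard greedy one: at most one vertex of each $K_i$ lies in $V\setminus V(M)$, for otherwise the edge between two such vertices augments $M$. Hence $|V_{c_0}\cap V(M)|\ge (3n-2)-t\ge (3n-2)/2$. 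In particular, there are at least $(3n-2)/2$ ``swap vertices'' in $V(M)$: for each $u\in V_{c_0}\cap V(M)$, replacing the matching edge $e_u\in M$ at $u$ by the edge $uu'$ (for some $u'\ne u$ in $u$'s clique of colour $c_0$) produces another maximum rainbow matching $M_u$ that uses $c_0$ and misses the colour $c(e_u)$.

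Building on this, I would develop an iterative switching procedure. Let $\mathcal{R}$ denote the family of maximum rainbow matchings obtained from $M$ by sequences of such swaps, and for each $M'\in\mathcal{R}$ write $\mathrm{miss}(M')$ for its unique missing colour. A single swap from $M$ already produces many distinct $M_u$ with different missing colours, because the $\ge (3n-2)/2\gg n/2$ swap vertices must free many different matching edges, hence many different missing colours. Iterating this from each $M_u$ as a new base, I would aim to show that one of the following must occur: either (i) at some stage the current matching $M'$ has an edge of $\mathrm{miss}(M')$ disjoint from $V(M')$, which augments $M'$ and contradicts maximality of $|M|$; or (ii) the set $\{\mathrm{miss}(M'):M'\in\mathcal{R}\}$ eventually covers so many colours that aggregating the individual lower bounds $|V_c\cap V(M')|\ge 3n-2-t_c$ over them, together with a careful double-count against the at most $2(n-1)$ relevant vertices, forces some colour to have strictly fewer than $3n-2$ vertices in total, contradicting the hypothesis.

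The main obstacle I expect is handling the near-extremal structure. The extremal example of $n-1$ vertex-disjoint triangles, every edge of which is replicated in all $n$ colours, gives $3n-3$ vertices per colour and no rainbow matching of size $n$; at $3n-2$ we sit only one vertex above this, so the proof must leverage this single global surplus. The delicate case is when most colour classes look almost like a common triangle packing and the swaps keep cycling through essentially the same matchings. Breaking this probably requires longer augmenting walks that exploit the extra vertex appearing in some colour class to ``escape'' the triangle cycle, together with an averaging argument exploiting large $n$ to guarantee enough independent swap branches. The ``sufficiently large $n$'' hypothesis should enter precisely to ensure that the branching process has room to find such an escape before it closes back on itself, and to absorb lower-order error terms coming from the non-independence of successive swaps.
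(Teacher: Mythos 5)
Your starting observation is essentially the same as the paper's Observation~\ref{observ} and the ``horn'' idea: each vertex of a missing colour in $V(M)$ gives a swap, and there are $\ge (3n-2)/2$ of them. However, the iterative switching scheme you propose is left entirely at the level of intent, and the two ways you hope it closes do not close. Regarding (ii): a single maximum rainbow matching $M'$ has exactly one missing colour, so aggregating inequalities $|V_c \cap V(M')| \ge (3n-2)/2$ over the family $\mathcal{R}$ gives bounds tied to \emph{different} vertex sets $V(M')$, and there is no obvious way to combine them into a bound against a fixed $2(n-1)$ vertices. To make a double-count of this flavour work, the paper first builds a single structured family of swaps --- the $t$-auxiliary matching $N$ (Lemmas~\ref{amlemma} and~\ref{amlemma2}) --- and double-counts $c$-coloured matchings $L_c$ against the fixed matching $N\cup(M\setminus M_N)$. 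Even then, the naive double-count only reaches threshold $3n-1$ (this is exactly what the paper's proof-outline sketch does). The extra ``$-1$'' requires a further, quite specific observation: for high-multiplicity auxiliary edges $e'=v_e m(x_e)$ (those repeated in $\Omega(n)$ colours of $C_N$), being a $c$-horn excludes \emph{all} of the colours repeated on $e'$ from producing an incident $L_{c'}$-edge, which drops the per-edge count from $|C_N|+2$ to at most $|C_N|$. This is the load-bearing idea for $3n-2$ and has no analogue in your proposal.

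There is a second structural idea you are missing entirely: the reduction by edge multiplicity. The paper first proves Theorem~\ref{strongboundedmult}, that $(n,(3-\delta')n)$-multigraphs with multiplicity at most $(1-\delta)n$ already have a full rainbow matching (via Proposition~\ref{weakboundedmult} plus a random-sampling ``boosting'' trick), and then handles the general case by iteratively deleting a constant-size matching of very high-multiplicity edges and re-colouring them at the end from the large common colour set $C'$. Your plan has no mechanism for separating or exploiting high-multiplicity edges, and without it the ``near-extremal triangle packing'' obstacle you correctly identify is not actually broken: your ``escape from the triangle cycle'' is a hope, not an argument, and nothing in the swap-iteration ensures it terminates with an augmentation rather than cycling. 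Finally, restricting to $k=n-1$ is not safe on its own; the paper works with $|M| < n-r$ for a suitably large constant $r$ precisely because the auxiliary-matching lemmas and the colour-swapping at the end need that slack to absorb error terms. In short, the proposal captures the very first step (swaps/horns) but lacks the auxiliary-matching framework, the $N_\alpha$ control on near-horn multiplicities, the bounded-multiplicity/sampling reduction, and the sharp horn-exclusion count --- i.e., essentially all of the technical content of the proof.
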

\noindent In the next section we will give some notation, preliminary lemmas and a proof outline. We will use these to prove Theorem \ref{mainthm} in Section 3. The last section of the paper contains some concluding remarks and further questions.
\section{Preliminaries}
We begin by first giving some notation and definitions. Throughout the paper we use standard graph-theoretic notation. We will often consider an edge-coloured multigraph $G$. For a colour $c$ we will say that an edge $e$ of colour $c$ is \textit{$c$-coloured} and that $c(e) = c$. The edge $e$ is naturally associated to a pair of vertices $xy$ which are its endpoints and we will also then say that the pair $xy$ is $c$-coloured. The edge $e$, or the pair $xy$, will be said to be \textit{repeated in at least $t$ colours} if there are at least $t$ different colours $c'$ such that $xy$ is $c'$-coloured. More generally, for a set of colours $C$, we will say that the edge $e$ (or the pair $xy$) is $C$-coloured if $c(e) \in C$. For two sets of vertices $X,Y$, we will let $E[X,Y]$ denote the set of edges which have one endpoint in $X$ and the other in $Y$. For a set of edges $F$, we will let $V(F)$ denote the set of endpoints of these edges and $C(F)$ denote the set of colours $c$ with a $c$-coloured edge in $F$. Given a matching $M$ and a set $A \subseteq V(M)$, we let $m(A)$ denote the set of opposite vertices in $M$ of the vertices in $A$ - usually the small $m$ notation will be unambiguous.  Also, in some situations, it will be clear that although we are referring to an edge $e$, we are instead considering its endpoints $xy$ - for example, we might refer to an edge-set of the form $E[e,X]$, where we are obviously considering the set $E \left[\{x,y\},X \right]$. Furthermore, we might say that $xy \in E[X,Y]$ where we mean that any edge with those endpoints belongs to that edge-set. We also make the following definition. 
\begin{defn}
For a matching $M$, a \emph{horn in $M$} is an edge $e \in M$ such that there exist two disjoint edges $e_1,e_2 \in E[e,V \setminus V(M)]$. It is a \emph{$c$-horn} if both edges $e_1,e_2$ are $c$-coloured and it is a \emph{$C$-rainbow horn} if there exist two distinct $c_1,c_2 \in C$ such that $e_1$ is $c_1$-coloured and $e_2$ is $c_2$-coloured. 
\end{defn}
\noindent Finally, we make a simple observation which we will refer to throughout the paper. Note that given a $(n,v)$-multigraph, we can always assume that each monochromatic clique is either a $K_3$ or a $K_2$, since every clique can be partitioned into disjoint edges and at most one triangle covering the same set of vertices.
\subsection{A few simple lemmas}
In this section, we will give some very simple lemmas which will be used throughout the paper. First, let us note the following.
\begin{obs}\label{observ}
Let $M$ be matching, $e \in M$ and suppose there is a set $C$ of three colours such that $e$ is a $c$-horn for two colours from $C$ and there is some edge in $E[e,V \setminus V(M)]$ coloured in the third colour from $C$. Then, $e$ is a $C$-rainbow horn.
\end{obs}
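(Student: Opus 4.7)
The statement is essentially a definition-chase, so my plan is a very short case analysis. Write $e = xy$ and $C = \{c_1, c_2, c_3\}$, where $e$ is assumed to be a $c_1$-horn and a $c_2$-horn, and let $g \in E[e, V \setminus V(M)]$ be the given $c_3$-coloured edge. By swapping the roles of $x$ and $y$ if necessary, I may assume $g = xz$ with $z \notin V(M)$. The $c_1$-horn hypothesis supplies two vertex-disjoint $c_1$-coloured edges from $e$ to $V \setminus V(M)$, and being vertex-disjoint they must use both endpoints of $e$; so one of them has the form $yp_1$ with $p_1 \notin V(M)$. Analogously, the $c_2$-horn yields an edge $yp_2$ with $p_2 \notin V(M)$.

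To exhibit a $C$-rainbow horn, I would case on whether the endpoint $z$ of $g$ clashes with $p_1$ or $p_2$. If $z \neq p_1$, then $\{xz, yp_1\}$ is a pair of vertex-disjoint edges in $E[e, V \setminus V(M)]$ with distinct colours $c_3, c_1 \in C$, and we are done; if instead $z = p_1$ but $z \neq p_2$, then $\{xz, yp_2\}$ works similarly with colours $c_3, c_2$. The only remaining case is $z = p_1 = p_2$, where I would abandon $g$ entirely and use the $c_1$-horn's other edge, i.e.\ the $x$-incident one $xa_1$; since it is disjoint from $yp_1$ we have $a_1 \notin V(M) \cup \{z\}$, so pairing $xa_1$ with $yz$ (which is $c_2$-coloured because $z = p_2$) gives the required rainbow horn in the two distinct colours $c_1, c_2 \in C$.

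The only mild subtlety is this final degenerate case, in which one uses the fact that $G$ is a multigraph so that the pair $yz$ may simultaneously carry the two colours $c_1$ and $c_2$; this is permitted by the setup of the problem, and beyond tracking this, no real obstacle arises. The whole argument should fit in a few lines.
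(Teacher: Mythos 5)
Your proof is correct and complete. The paper does not actually spell out an argument for this observation, stating only that it is ``quick to check,'' so there is nothing to compare against in terms of approach. Your case analysis (on whether the outside endpoint $z$ of the $c_3$-coloured edge collides with $p_1$ and/or $p_2$) is exactly the kind of routine check being left to the reader, and the degenerate case $z = p_1 = p_2$ is handled correctly: you discard $g$ and pair the $x$-incident edge of the $c_1$-horn with the $y$-incident edge of the $c_2$-horn, and the vertex-disjointness of the $c_1$-horn pair guarantees $a_1 \neq z$, so the two edges are disjoint. Your remark about the multigraph being needed here is accurate but slightly overstated: what matters is that two distinct edges of colours $c_1$ and $c_2$ may share the endpoint-pair $yz$, which is indeed allowed since the multigraph may carry parallel edges in different colours.
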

\noindent It is quick to check the above observation and further, as a trivial corollary, note the following. \begin{lem}\label{hornslemma}
Let $N$ be a matching and $C$ a set of colours such that for each $c \in C$ there are at least $k$ many $c$-horns in $N$. If $k|C| > 2|N|$, there exists a $C$-rainbow horn in $N$.
\end{lem}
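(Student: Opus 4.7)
The plan is to combine a simple double-counting argument with \Cref{observ}. I would begin by considering the bipartite counting structure of pairs $(c,e)$ with $c \in C$ and $e \in N$ a $c$-horn. By hypothesis, the number of such pairs is at least $k|C|$, and rewriting this count by summing over $e \in N$ rather than over $c \in C$ gives
\[
\sum_{e \in N} \big|\{\, c \in C : e \text{ is a } c\text{-horn in } N \,\}\big| \;\geq\; k|C| \;>\; 2|N|.
\]
By averaging, there must exist some edge $e^* \in N$ that is a $c$-horn for strictly more than two colours in $C$, and hence for at least three distinct colours $c_1, c_2, c_3 \in C$.

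Next I would invoke \Cref{observ} on $e^*$ with the three-element colour set $C' = \{c_1, c_2, c_3\} \subseteq C$. Since $e^*$ is a $c_1$-horn and a $c_2$-horn, the first condition of the observation is satisfied; and since $e^*$ is also a $c_3$-horn, in particular there is a $c_3$-coloured edge in $E[e^*, V \setminus V(N)]$, which is the third condition. The observation therefore produces two disjoint edges of distinct colours from $C'$ joining $e^*$ to $V \setminus V(N)$. These two colours belong to $C$ as well, so $e^*$ is a $C$-rainbow horn, as required.

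There is really no obstacle here: the proof is essentially the pigeonhole inequality $k|C| > 2|N|$ combined with the fact, formalised by \Cref{observ}, that once one edge of $N$ receives enough (namely three) different horn-colours from $C$ to $V\setminus V(N)$, one can always extract two vertex-disjoint edges of distinct colours. The sole routine check hidden in applying the observation is that the three pairs of horn-edges cannot all collide at a single outside vertex, which follows because the defining pair of disjoint edges of any single $c_i$-horn already shares no vertex.
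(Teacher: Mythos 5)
Your proof is correct and takes essentially the same approach as the paper: a double-counting over (colour, $c$-horn edge) pairs to produce an edge that is a $c$-horn for at least three colours of $C$, followed by an application of Observation~\ref{observ}. The paper states this as a contrapositive (assume no $C$-rainbow horn, conclude $k|C| \leq 2|N|$), whereas you argue directly, but the underlying count and the use of Observation~\ref{observ} are identical.
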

\begin{proof}
Suppose there is no $C$-rainbow horn. By the previous observation, then no edge in $N$ is a $c$-horn for at least three colours $c \in C$. Therefore, since there are $k$ many $c$-horns for each colour $c$, we must have $k|C| \leq 2|N|$, which is a contradiction.
\end{proof}
\noindent We also give a lemma which, in various situations, will essentially allow us to forget about the various monochromatic triangles which can appear in $(n,v)$-multigraphs and only look at each colour class as a matching. 
\begin{lem}\label{triangles}Let $M$ be a matching and $A \subseteq V(M)$ be such that $A \cap m(A) = \emptyset$.   Let $H$ be a disjoint union of non-trivial cliques with no edges contained in $A \cup (V \setminus V(M))$ and with in total at least $2|M|+s$ vertices. Then, there exists a matching in $H$ of size $s$ consisting of edges with one endpoint in $V(M) \backslash (A \cup m(A))$ and the other in $A \cup (V \setminus V(M))$.
\end{lem}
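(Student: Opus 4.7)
The plan is to recast the problem in terms of a partition of $V$ and then run a one-shot double-counting argument on the cliques of $H$. Set $X := A \cup (V \setminus V(M))$, $Y := m(A)$, and $B := V(M) \setminus (A \cup m(A))$. Because $A \cap m(A) = \emptyset$, the vertex $m$-partner map pairs $A$ bijectively with $Y$, so $V = X \sqcup Y \sqcup B$, $|Y| = |A|$ and $|B| = 2|M| - 2|A|$. In this notation the target matching is precisely a matching of $s$ edges of $H$ between $B$ and $X$.

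The structural observation that drives the proof is that, because $H$ contains no edge inside $X$, every non-trivial clique $K$ of $H$ has $|V(K) \cap X| \le 1$. I would partition the cliques of $H$ into three classes: $\mathcal{K}_1$ of cliques with one vertex in $X$ and at least one vertex in $B$, $\mathcal{K}_2$ of cliques with one vertex in $X$ and no vertex in $B$, and $\mathcal{K}_3$ of cliques with no vertex in $X$. Summing $|V(K) \cap X| + |V(K) \cap Y| + |V(K) \cap B|$ over all cliques $K$ and using $\sum_K |V(K) \cap X| = |\mathcal{K}_1| + |\mathcal{K}_2|$, $\sum_K |V(K) \cap Y| \le |Y| = |A|$, and $\sum_K |V(K) \cap B| \le |B| = 2|M| - 2|A|$ then gives
\[
2|M| + s \;\le\; |V(H)| \;\le\; |\mathcal{K}_1| + |\mathcal{K}_2| + 2|M| - |A|,
\]
so $|\mathcal{K}_1| + |\mathcal{K}_2| \ge |A| + s$. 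Each $K \in \mathcal{K}_2$ is non-trivial with $|V(K) \cap X| = 1$ and $|V(K) \cap B| = 0$, so contains at least one vertex of $Y$; vertex-disjointness of the cliques yields $|\mathcal{K}_2| \le |Y| = |A|$. Combining these gives $|\mathcal{K}_1| \ge s$.

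To finish, for each $K \in \mathcal{K}_1$ I pick the edge joining its unique vertex $x_K \in X$ to an arbitrary vertex of $V(K) \cap B$; such an edge lies in $H$ because $K$ is a clique, and the vertex-disjointness of the cliques makes these edges form a matching. The resulting matching has at least $s$ $B$-$X$ edges, which is what is required. The argument is essentially a single counting inequality; the only point that needs any care is to confirm that the hypothesis $A \cap m(A) = \emptyset$ really does make $V = X \sqcup Y \sqcup B$ a genuine partition so that $|B| = 2|M| - 2|A|$, and that the forbidden-edge hypothesis forces $|V(K)\cap X| \le 1$ on every non-trivial clique.
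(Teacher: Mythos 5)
Your proof is correct and takes essentially the same approach as the paper's: both are counting arguments exploiting that each non-trivial clique of $H$ has at most one vertex in $A \cup (V\setminus V(M))$ and that at most $|A|$ edges can be wasted on $m(A)$. The paper counts the $\geq |A|+s$ vertices of $H$ lying outside $S=V(M)\setminus A$ and discards the $\leq|A|$ resulting edges that land in $m(A)$, whereas you organize the identical count at the level of cliques; the two bookkeepings are equivalent.
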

\begin{proof}
Define the set $S := V(M) \setminus A$ so that there is no edge of $H$ contained outside $S$. Note that $H$ has at least $|A|+s$ vertices outside $S$ and so, there is an edge of $H$ from each such vertex to $S$. Since the graph $H$ is a disjoint union of non-trivial cliques, these edges must be pairwise disjoint, as otherwise, there would be an edge of $H$ connecting their endpoints outside $S$. This then gives a matching in $H$ contained in $E[S,V \setminus S]$ of size at least $|A|+s$. Since at most $|A|$ edges of this matching intersect $m(A)$, we are done.
\end{proof}
We will finally need a standard probabilistic concentration inequality, which can be found in most probabilistic textbooks (e.g., \cite{hoeffding}).
\begin{lem}\label{chernoff}
Let $X$ be the sum of independent random variables $X_1, \ldots, X_n$ such that each $0 \leq X_i \leq k $. Then, for all $0 < \eps < 1$,
$$\mathbb{P} \left( |X - \mathbb{E}[X]| > \eps \mathbb{E}[X] \right) \leq 2 e^{-\eps^2 \mathbb{E}[X]/3k^2}$$
\end{lem}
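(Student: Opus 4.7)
The plan is to prove this standard concentration inequality by the Chernoff–Cram\'er exponential moment method, handling the upper and lower tails $\P(X \geq (1+\eps)\mu)$ and $\P(X \leq (1-\eps)\mu)$ separately (writing $\mu := \mathbb{E}[X]$) and combining them by a union bound to obtain the factor of $2$ in the conclusion. In view of the form of the bound, I may reduce to the case $k \geq 1$: indeed, the statement is typically applied with $k$ a positive integer, and for $k < 1$ one rescales to $Y_i = X_i / k \in [0,1]$ and applies the $k=1$ case to $Y$.

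For the upper tail, fix $t > 0$ and apply Markov's inequality to $e^{tX}$, giving $\P(X \geq (1+\eps)\mu) \leq e^{-t(1+\eps)\mu}\,\mathbb{E}[e^{tX}] = e^{-t(1+\eps)\mu}\prod_{i=1}^n \mathbb{E}[e^{tX_i}]$ by independence. The core technical ingredient is a convexity bound on the moment generating function: since $x \mapsto e^{tx}$ is convex on $[0,k]$, one has $e^{tx} \leq 1 + (x/k)(e^{tk} - 1)$ for all $x \in [0,k]$, so taking expectations and then invoking $1+y \leq e^y$ gives $\mathbb{E}[e^{tX_i}] \leq \exp\bigl((\mathbb{E}[X_i]/k)(e^{tk}-1)\bigr)$, and hence $\mathbb{E}[e^{tX}] \leq \exp\bigl((\mu/k)(e^{tk}-1)\bigr)$. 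Optimising by setting $t = \ln(1+\eps)/k$ yields
$$\P\bigl(X \geq (1+\eps)\mu\bigr) \leq \exp\!\left(\frac{\mu}{k}\bigl(\eps - (1+\eps)\ln(1+\eps)\bigr)\right),$$
and the elementary inequality $(1+\eps)\ln(1+\eps) - \eps \geq \eps^2/3$ on $[0,1]$ upgrades this to $\exp(-\eps^2 \mu/(3k)) \leq \exp(-\eps^2 \mu/(3k^2))$ under $k \geq 1$.

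The lower tail is entirely symmetric: applying Markov to $e^{-tX}$ for $t > 0$, the same convexity argument gives $\mathbb{E}[e^{-tX_i}] \leq \exp\bigl((\mathbb{E}[X_i]/k)(e^{-tk}-1)\bigr)$, and after optimising at $t = -\ln(1-\eps)/k$ and invoking $(1-\eps)\ln(1-\eps) + \eps \geq \eps^2/2$ for $\eps \in [0,1)$, one obtains $\P(X \leq (1-\eps)\mu) \leq \exp(-\eps^2 \mu/(2k)) \leq \exp(-\eps^2 \mu/(3k^2))$. Adding the two tail probabilities then produces the claimed bound. The only step requiring any actual work is the scalar inequality $(1+\eps)\ln(1+\eps) - \eps \geq \eps^2/3$, which I would verify by writing its left-hand side as $\int_0^\eps \ln(1+s)\,ds$, using the Taylor bound $\ln(1+s) \geq s - s^2/2$ on $[0,1]$ (valid because the remainder of the alternating series for $\ln(1+s)$ is non-negative there), and integrating to obtain $\eps^2/2 - \eps^3/6 \geq \eps^2/3$ for $\eps \in [0,1]$; the companion lower-tail inequality $(1-\eps)\ln(1-\eps) + \eps \geq \eps^2/2$ is handled in the same way.
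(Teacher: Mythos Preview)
The paper does not supply its own proof of this lemma; it is stated as a standard concentration inequality with a citation to a textbook. Your exponential-moment (Chernoff--Cram\'er) argument is the standard one and is correct for $k \geq 1$, which is the only regime the paper needs (it is applied with $k=3$ and $k=1$). The convexity bound on $\mathbb{E}[e^{tX_i}]$, the optimisation in $t$, and the two scalar inequalities are all handled cleanly.

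One small caveat: your reduction for $k<1$ does not actually deliver the stated bound. Rescaling to $Y_i = X_i/k \in [0,1]$ and applying the $k=1$ case gives $2\exp(-\eps^2\mu/(3k))$, but for $k<1$ one has $k^2<k$, so $\exp(-\eps^2\mu/(3k^2))$ is \emph{smaller} than $\exp(-\eps^2\mu/(3k))$; the rescaled bound is therefore weaker than the claim, not stronger. In fact the inequality with $k^2$ in the denominator is not true in general for $k<1$ (take the $X_i$ to be $k$ times i.i.d.\ Bernoullis and compare with the sharp Chernoff bound). This is harmless here since the paper tacitly restricts to integer $k \geq 1$, and your own hedge already anticipates this; but you should drop the sentence about rescaling, as it is the one incorrect step in an otherwise sound write-up.
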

\subsection{Auxiliary matchings}
For a rainbow matching $M$, a \emph{$t$-auxiliary matching for $M$} is a matching $N \subseteq E[V \setminus V(M),V(M)]$ such that each one of its edges is repeated in at least $t$ colours which are not used in $M$ and no two of its edges intersect the same edge in $M$. We will let $M_N \subseteq M$ denote the set of edges of $M$ which intersect an edge of $N$. For $e \in M_N$, we will let $x_e$ denote the endpoint of $e$ which is contained in $V(N)$, $m(x_e)$ denote its opposite vertex in $M$ (as usual) and $v_e$ denote its opposite vertex in $N$. Finally, we let $C_N$ denote the set of colours used in $M_N$, that is, the set $C(M_N)$. The reader might want to refer to Figure \ref{am} for an illustration.

\begin{figure}[h]
\RawFloats
\begin{minipage}[t]{0.5\textwidth}
\centering
\captionsetup{width=\textwidth}

\captionsetup{width=0.8\textwidth}

\begin{tikzpicture}[scale =1, xscale=1, yscale=1]

\draw[rounded corners] (-1,-1.6) rectangle (1,1.2);
\node[scale=1] (M) at(0,1.6) {$M$};

% \draw (-0.75,1.8) -- (0.75,1.8);

% \draw (-0.75,1.4) -- (0.75,1.4);

% \draw (-0.75,1) -- (0.75,1);
\draw (-0.8,0.8) -- (0.8,0.8);
\draw (-0.8,-0.4) -- (0.8,-0.4);
\draw (-0.8,-0.8) -- (0.8,-0.8);
\draw (-0.8,-1.2) -- (0.8,-1.2);

% \node[scale=0.8] (e) at(0,0.2) {$e$};

\draw[red] (-0.8,0.8) -- (-1.8,0.8);
\draw[red] (-0.8,-0.4) -- (-1.8,-0.4);

\draw[red] (-0.8,0.8) -- (-1.8,0.8);
\draw[red] (-0.8,-0.4) -- (-1.8,-0.4);

\draw[red] (-1.3,0.8) ellipse (0.5 and 0.1); 
\draw[red] (-1.3,-0.4) ellipse (0.5 and 0.1);

\draw[red] (-1.3,0.8) ellipse (0.5 and 0.1); 
\draw[red] (-1.3,-0.4) ellipse (0.5 and 0.1); 

\draw[red, rounded corners] (-2,-0.6) rectangle (-0.6,1);
\draw[red, rounded corners] (-2,-0.6) rectangle (-0.6,1);
\node[red, scale=1] (N) at(-2.5,1.2) {$N$};
\node[red, scale=1] (N) at(-2.5,1.2) {$N$};

\draw[thick] (-0.8,0.4) -- (0.8,0.4);
\draw[thick] (-0.8,0.4) -- (0.8,0.4);
\draw[red] (-1.3,0.4) ellipse (0.5 and 0.1); 
\draw[red] (-0.8,0.4) -- (-1.8,0.4);
\draw[red] (-1.3,0.4) ellipse (0.5 and 0.1); 
\draw[red] (-0.8,0.4) -- (-1.8,0.4);
\node[scale=3] at(-0.8,0.4) {$.$};
\node[scale=3] at(0.8,0.4) {$.$};
\node[scale=3] at(-1.8,0.4) {$.$};
\node[scale=1] at(0,0.6) {$e$};
\draw[arow=1.6,blue]  (0.9,0.5) -- (2,1.8);
\draw[arow=1.6,blue]  (-0.9,0.3) -- (-2.5,-1);
\draw[arow=1.6,blue]  (-1.9,0.35) -- (-3,0);
\node[scale=0.8] at(-3.3,-0.1) {$v_e$};
\node[scale=0.8] at(-2.7,-1.2) {$x_e$};
\node[scale=0.8] at(2.3,2.1) {$m(x_e)$};
\node[scale=1] at(0,0.1) {$.$};
\node[scale=1] at(0,0) {$.$};
\node[scale=1] at(0,-0.1) {$.$};

\node[scale=1] at(-1.3,-0.1) {$.$};

\node[scale=1] at(-1.3,0.1) {$.$};
\node[scale=1] at(-1.3,0) {$.$};
\draw [decorate,decoration={brace,  amplitude=5pt,raise=2pt},yshift=0pt]($(1.15,1)$) -- ($(1.15,-0.6)$);
\node[scale=1] at(2,0.4) {$M_N$};

\end{tikzpicture}

\caption{A $3$-auxiliary matching for $M$.}
\label{am}
\end{minipage}\hfill
\begin{minipage}[t]{0.5\textwidth}
\centering
\captionsetup{width=\textwidth}
\begin{tikzpicture}[scale = 0.95, xscale=1, yscale=1]

\draw[blue!100] (-0.8,2.4) -- (1,2.4);
\draw[blue!100] (-0.8,2.4) -- (1,2.4);
\draw[green!100] (-0.8,1.6) -- (1,1.6);
\draw[green!100] (-0.8,1.6) -- (1,1.6);

\draw (-0.8,0.8) -- (1,0.8);
\draw (-0.8,0.8) -- (1,0.8);
\draw (-0.8,0) -- (1,0);
\draw (-0.8,0) -- (1,0);
\draw (-0.8,-0.8) -- (1,-0.8);
\draw (-0.8,-0.8) -- (1,-0.8);

\draw (-0.8,0.8) -- (1,0.8);
\draw (-0.8,0.8) -- (1,0.8);
\draw (-0.8,0) -- (1,0);
\draw (-0.8,0) -- (1,0);
\draw (-0.8,-0.8) -- (1,-0.8);
\draw (-0.8,-0.8) -- (1,-0.8);

\draw[red!100] (-1.3,1.6) ellipse (0.5 and 0.2); 
\draw[red!100] (-1.3,1.6) ellipse (0.5 and 0.08); 
\draw[red!100] (-1.3,2.4) ellipse (0.5 and 0.2); 
\draw[red!100] (-1.3,2.4) ellipse (0.5 and 0.08); 
\draw[red!100] (-1.3,0.8) ellipse (0.5 and 0.2); 
\draw[red!100] (-1.3,0.8) ellipse (0.5 and 0.08); 
\draw[red!100] (-1.3,0) ellipse (0.5 and 0.2); 
\draw[red!100] (-1.3,0) ellipse (0.5 and 0.08); 

\draw[red!100] (-1.3,-0.8) ellipse (0.5 and 0.2); 
\draw[red!100] (-1.3,-0.8) ellipse (0.5 and 0.08); 

\draw[green!=100] (-1.8,-0.8) -- (1,0);
\draw[green!=100] (-1.8,-0.8) -- (1,0);
\draw[green!=100] (-1.8,-0.8) -- (1,0);
\draw[green!=100] (-1.8,-0.8) -- (1,0);
\draw[blue!=100] (-0.8,-0.8) -- (1,0.8);
\draw[blue!=100] (-0.8,-0.8) -- (1,0.8);
\draw[blue!=100] (-0.8,-0.8) -- (1,0.8);
\draw[blue!=100] (-0.8,-0.8) -- (1,0.8);

\node[scale=3] at(1,0.8) {$.$};
\node[scale=3] at(1,0) {$.$};
\node[scale=3] at(-0.8,-0.8) {$.$};
\node[scale=3] at(-1.8,-0.8) {$.$};
\node[scale=1] at(0.1,-1.1) {$f$};

\end{tikzpicture}
\caption{A $C_N$-rainbow horn in $N \cup (M \setminus M_N)$.}
\label{ill}
\end{minipage}
\end{figure}
\noindent As a first simple observation, we will show the following lemma, which will be the basis of most of the local considerations done throughout the paper. Indeed, a particular case of the lemma is Observation \ref{observ}.
\begin{lem}\label{lema}
Let $M$ be a maximal rainbow matching, $C_0$ be the set of colours not in $C(M)$,  $N$ be a $t$-auxiliary matching for $M$ and $M' \subseteq M$ be a set of size less than $t/2 - 1$. Then there is no $(C_0 \cup C_N)$-coloured rainbow matching of size $|M'|+1$ disjoint to the matching $\{v_e x_e: e \in M_N \setminus M'\} \cup (M \setminus (M_N \cup M'))$.
\end{lem}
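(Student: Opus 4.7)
I would argue by contradiction: assume such a rainbow matching $R$ exists and produce a rainbow matching of size $|M|+1$, contradicting the maximality of $M$. Set
\[
    L \;:=\; \{v_e x_e : e \in M_N \setminus M'\} \;\cup\; \bigl(M \setminus (M_N \cup M')\bigr),
\]
so $|L| = |M| - |M'|$ and, by the disjointness hypothesis on $R$, the union $R \cup L$ is a matching of size $(|M'|+1) + (|M|-|M'|) = |M|+1$. The goal is to exhibit a rainbow colouring of $R \cup L$.

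Retain on each edge of $R$ its given colour (in $C_0 \cup C_N$) and on each edge of $M \setminus (M_N \cup M')$ its original $M$-colour. The latter colours lie in $C(M) \setminus C(M_N \cup M') \subseteq C(M) \setminus C_N$, and since $C(M) \cap C_0 = \emptyset$ they are automatically disjoint both from $C_0$ and from the colours of $R$. So the only nontrivial step is to assign, to each ``Type-1'' edge $v_e x_e$ with $e \in M_N \setminus M'$, one of its $\geq t$ repetitions in $C_0$, so that the chosen colours are pairwise distinct and avoid the (at most $|M'|+1$) colours used by $R$ in $C_0$.

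The main quantitative input is that $|R \cap C_0| \leq |R| = |M'|+1 < t/2$, so each Type-1 edge retains more than $t/2$ valid $C_0$-colour options once the $R$-blocked colours are excluded. I would set this up as a Hall / system-of-distinct-representatives problem on the bipartite graph between the Type-1 edges and their available $C_0$-colours. When $|M_N \setminus M'| \leq t/2$ a naive greedy already succeeds, since the supply of fresh colours strictly exceeds the number already chosen at each step.

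The main obstacle is the regime where $|M_N \setminus M'|$ is much larger than $t$, in which case a greedy could run out of colours and one must verify Hall's condition directly. To rule out a Hall-violating set $S$ of Type-1 edges, I would use the WLOG structure that each monochromatic clique is a $K_2$ or $K_3$ (so any two distinct Type-1 edges $v_e x_e, v_{e'}x_{e'}$ sharing a common $C_0$-colour must lie in distinct cliques), together with the maximality of $M$, which forbids any edge coloured in $C_0$ from being disjoint from $V(M)$. A Hall-violating $S$ would force many $N$-edges to concentrate their $C_0$-repetitions into a small pool of colours, which via this clique-plus-maximality constraint would itself produce a rainbow matching larger than $M$ by a direct local swap, again contradicting the maximality of $M$. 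Either way we obtain the sought contradiction and the lemma follows.
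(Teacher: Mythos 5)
The approach breaks down at the step you flag as the "main obstacle", and the proposed fix (Hall's condition plus a clique/maximality argument) cannot work, because the obstacle is real. You try to assign a \emph{distinct} $C_0$-colour to \emph{every} edge $v_e x_e$ with $e \in M_N \setminus M'$. The number of such edges is $|M_N \setminus M'|$, which in the intended applications is close to $n$ (in Lemma~\ref{amlemma}, $N$ is built to have size close to $n$), whereas $|C_0| = n - |M|$ may be a small constant $r$. No system-of-distinct-representatives argument can produce $\approx n$ distinct colours out of $r$ available ones; Hall's condition genuinely fails, and the failure does not secretly encode a larger rainbow matching.

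The missing idea is that you should not throw away the original edges of $M_N \setminus M'$. The paper's construction keeps \emph{most} of $M$ (including most of $M_N$) with its original colours, and swaps $e \mapsto v_e x_e$ only for those $e \in M_N \setminus M'$ that are forced out: the edges $e_1,\dots,e_i$ whose free endpoint $m(x_e)$ is covered by $R$, and the edges $e'_1,\dots,e'_j$ whose colour $c(e)$ is used by $R$. The resulting augmented matching is $\bigl(M \setminus (\{e_1,\dots,e_i,e'_1,\dots,e'_j\} \cup M')\bigr) \cup R \cup \{v_{e_1}x_{e_1},\dots,v_{e'_j}x_{e'_j}\}$, and only $i+j$ fresh $C_0$-colours need to be found, avoiding the at most $|R|-j$ colours of $C_0$ already used by $R$. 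Since $(|R|-j) + i + j \leq 2|R| = 2|M'|+2 < t$, the $\geq t$ available $C_0$-colours on each swapped edge more than suffice for a greedy choice. Your write-up gets the bookkeeping on the kept part $M\setminus(M_N\cup M')$ right, but without the selective swap you cannot close the argument.
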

\begin{proof}
Suppose otherwise and let $L$ be such a $\left(C_0 \cup C_N \right)$-coloured rainbow matching. Let $e_1, \ldots, e_i$ denote the edges of $M_N \setminus M'$ which intersect edges of $L$ and let $e'_1, \ldots, e'_j$ denote the edges of $M_N \setminus (M' \cup \{e_1, \ldots, e_i\})$ whose colours are represented in $L$. By the definition of a $t$-auxiliary matching and since $|C_0 \cap C(L)| + i + j \leq (|L|-j) + i + j \leq 2|L| = 2|M'| + 2 < t$, note that we can pick distinct colours in $C_0 \setminus C(L)$ for the edges $v_{e_l}x_{e_l}, v_{e'_l}x_{e'_l}$ so that $$(M \setminus (\{e_1, \ldots, e_i,e'_1, \ldots, e'_j\} \cup M')) \cup L \cup \{v_{e_1}x_{e_1}, \ldots, v_{e_i}x_{e_i}, v_{e'_1}x_{e'_1}, \ldots, v_{e'_j}x_{e'_j}\} $$
forms a larger rainbow matching than $M$, which is a contradiction.
\end{proof}
Next, we give two corollaries of the above lemma. In both of them, let $N$ be a $t$-auxiliary matching for $M$.
\begin{lem}\label{amclaim1}
Let $t \geq 5$ and $M$ be a maximal rainbow matching and $C_0$ be the set of colours not in $C(M)$. Then, there is no $(C_0 \cup C_N)$-coloured edge disjoint to the matching $N \cup (M \setminus M_N)$ and there is no $(C_0 \cup C_N)$-rainbow horn in the matching $N \cup (M \setminus M_N)$.
\end{lem}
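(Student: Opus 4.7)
The plan is to obtain both parts of the statement as direct corollaries of Lemma \ref{lema}, by choosing $M'$ so that the matching appearing in the conclusion of that lemma becomes either $N \cup (M \setminus M_N)$ or $(N \cup (M \setminus M_N)) \setminus \{f\}$ for an appropriate edge $f$. Since $t \geq 5$, the size constraint $|M'| < t/2 - 1$ is satisfied for both $|M'| = 0$ and $|M'| = 1$, which is precisely the range we will need.

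For the first assertion I would take $M' = \emptyset$: the matching in Lemma \ref{lema} then becomes $\{v_e x_e : e \in M_N\} \cup (M \setminus M_N) = N \cup (M \setminus M_N)$, and Lemma \ref{lema} immediately rules out any $(C_0 \cup C_N)$-coloured rainbow matching of size $1$ disjoint from it, i.e., any $(C_0 \cup C_N)$-coloured edge disjoint from $N \cup (M \setminus M_N)$. For the second assertion, suppose toward contradiction that some $f \in N \cup (M \setminus M_N)$ is a $(C_0 \cup C_N)$-rainbow horn with witnessing edges $f_1, f_2$ of distinct colours from $C_0 \cup C_N$. I would choose $|M'| = 1$ so that $f$ is deleted from the matching in Lemma \ref{lema}: if $f \in M \setminus M_N$, set $M' = \{f\}$; if instead $f = v_e x_e \in N$, set $M' = \{e\}$ (here $e \in M_N$, so $M \setminus (M_N \cup \{e\}) = M \setminus M_N$). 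In either case the matching in Lemma \ref{lema} equals $(N \cup (M \setminus M_N)) \setminus \{f\}$, and since each horn edge $f_i$ has one endpoint in $V(f)$ and the other in $V \setminus V(N \cup (M \setminus M_N))$, the pair $\{f_1, f_2\}$ is a $(C_0 \cup C_N)$-coloured rainbow matching of size $2$ vertex-disjoint from this reduced matching --- contradicting Lemma \ref{lema}.

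The only thing that really needs checking is the case $f \in N$, where one must verify that $V(f) = \{v_e, x_e\}$ lies outside the vertex set of $(N \setminus \{f\}) \cup (M \setminus M_N)$: this is immediate since $v_e \notin V(M)$, since $x_e \in V(e) \subseteq V(M_N)$, and since $N$ is itself a matching. I do not foresee any real obstacle beyond this routine book-keeping; the lemma is essentially a convenient local repackaging of Lemma \ref{lema} in the two smallest nontrivial cases.
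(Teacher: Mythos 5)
Your proposal is correct and matches the paper's own argument: both parts are deduced directly from Lemma~\ref{lema}, with $M'=\emptyset$ handling the case of a stray $(C_0\cup C_N)$-coloured edge and $|M'|=1$ (taking $M'=\{f\}$ or $M'=\{e\}$ according to whether the horn lies in $M\setminus M_N$ or in $N$) handling the rainbow horn. You have simply made explicit the bookkeeping that the paper leaves to the reader.
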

\begin{lem}\label{amclaim2}
Let $t \geq 5$ and $M$ be a maximal rainbow matching and $C_0$ be the set of colours not in $C(M)$. Then, if $v_e z$ is a $(C_0 \cup C_N)$-coloured edge with $e \in M_N$ and $z \notin V(N) \cup V(M \setminus M_N) $, then $z = m(x_e)$ or the edge is of colour $c(e)$. 
\end{lem}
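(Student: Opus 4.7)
The plan is to derive this statement as an immediate consequence of Lemma~\ref{lema} applied with $M' = \{e\}$, in exactly the same spirit as Lemma~\ref{amclaim1}. Since $t \geq 5$, we have $|M'| = 1 < t/2 - 1$, so Lemma~\ref{lema} forbids any $(C_0 \cup C_N)$-coloured rainbow matching of size $2$ that is disjoint from $\{v_f x_f : f \in M_N \setminus \{e\}\} \cup (M \setminus M_N)$. My goal is to show that, if the conclusion failed, we could exhibit such a forbidden size-$2$ matching.

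So assume for contradiction that $v_e z$ is a $(C_0 \cup C_N)$-coloured edge with $e \in M_N$ and $z \notin V(N) \cup V(M \setminus M_N)$, yet $z \neq m(x_e)$ and the colour $c$ of $v_e z$ is distinct from $c(e)$. I would take $L = \{v_e z,\, e\}$. This is a rainbow $2$-matching whose colours lie in $C_0 \cup C_N$: indeed $c \in C_0 \cup C_N$ by hypothesis and $c(e) \in C_N$ because $e \in M_N$, and $c \neq c(e)$ by assumption. The two edges are vertex-disjoint because $v_e \notin V(M)$ (so $v_e$ misses $e$), while $z \neq x_e$ (as $x_e \in V(N)$ but $z \notin V(N)$) and $z \neq m(x_e)$ by assumption.

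It remains to check that $L$ is disjoint from the matching $\{v_f x_f : f \in M_N \setminus \{e\}\} \cup (M \setminus M_N)$. The vertex $v_e$ does not belong to $V(M)$ and, since $N$ is a matching with edges in $E[V\setminus V(M), V(M)]$, is distinct from every $v_f$ and $x_f$ with $f \neq e$. The hypothesis $z \notin V(N) \cup V(M \setminus M_N)$ handles the other endpoint. For the edge $e \in M_N$ itself, $x_e$ is not an $x_f$ for $f \neq e$ (distinct edges of the matching $N$), and $m(x_e) \notin V(N)$, since otherwise two edges of $N$ would meet the same edge $e$ of $M$, contradicting the definition of a $t$-auxiliary matching; moreover $e \in M_N$ is disjoint from $M \setminus M_N$. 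This gives the desired contradiction with Lemma~\ref{lema}.

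I do not foresee any real obstacle: the proof is essentially bookkeeping about which of the vertex sets $V(N)$, $V(M_N)\setminus V(N)$, $V(M\setminus M_N)$, and $V\setminus V(M)$ each relevant vertex lies in, and the argument parallels that of Lemma~\ref{amclaim1}, which is likewise a direct corollary of Lemma~\ref{lema} applied to a small $M'$.
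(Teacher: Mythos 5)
Your proposal is correct and is essentially the same argument as the paper's: the paper derives this from Lemma~\ref{amclaim1} by observing that the pair $\{e, v_e z\}$ is a forbidden $(C_0 \cup C_N)$-coloured rainbow matching (making $v_e x_e$ a rainbow horn in $N \cup (M \setminus M_N)$), while you apply Lemma~\ref{lema} directly with $M' = \{e\}$ to the same rainbow matching $\{e, v_e z\}$ — a purely presentational difference, since Lemma~\ref{amclaim1} is itself the $|M'|=1$ instance of Lemma~\ref{lema}.
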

\noindent The first lemma can be easily checked by Lemma \ref{lema}. Indeed, note that the first part is a corollary of it with $M' = \emptyset$ and the second part follows when $M'$ consists of a single edge - the reader might want to refer to Figure \ref{ill} where the case of a rainbow horn of the form $v_f x_f$ for some $f \in M_N$ is illustrated. The second lemma is actually a corollary of the first - indeed, if $v_e z$ is a $((C_0 \cup C_N) \setminus \{c(e)\})$-coloured edge with $z \neq m(x_e)$ then the edges $e$ and $v_e z$ form a $(C_0 \cup C_N)$-coloured rainbow matching and so, the edge $v_ex_e$ is a $(C_0 \cup C_N)$-rainbow horn in the matching $N \cup (M \setminus M_N)$, thus contradicting Lemma \ref{amclaim1}.

We are now ready to give the two main facts about auxiliary matchings that we will need. The first will essentially show that maximal rainbow matchings have large auxiliary matchings.
\begin{lem}\label{amlemma}
For all $\delta > 0$ and $t$, there exist $\delta' > 0$ and $r$ such that the following holds for all sufficiently large $n$. Let $G$ be a $(n,\left(3-\delta' \right)n)$-multigraph, $M$ a maximal rainbow matching in $G$ and suppose $|M| < n-r$. Then, there is a $t$-auxiliary matching for $M$ of size at least $(1-\delta)n$. 
\end{lem}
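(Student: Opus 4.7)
The plan is to write $k := n - |M|$ so that $C_0$, the set of colours missing from $M$, has $|C_0| = k > r$. For each $c \in C_0$, the colour class $H_c$ has $\ge (3-\delta')n$ vertices and, by maximality of $M$, no $c$-edge inside $V \setminus V(M)$; Lemma \ref{triangles} applied with $A = \emptyset$ then produces a matching $M_c \subseteq E[V(M), V \setminus V(M)]$ of $c$-coloured edges of size at least $(1-\delta')n + 2k$. Setting $S_x := \{c \in C_0 : x \in V(M_c)\}$ and observing that each $x \in V(M)$ has at most one $c$-neighbour in $V \setminus V(M)$ (otherwise the $c$-clique through $x$ would contain a $c$-edge in $V \setminus V(M)$, contradicting maximality), we get the identity
\[ \sum_{x \in V(M)} |S_x| \;=\; \sum_{c \in C_0} |M_c| \;\ge\; k\bigl((1-\delta')n + 2k\bigr), \]
together with $|S_x| \le k$, so the left-hand side is at most $2(n-k)k$.

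The main structural input is that $M$ has no $C_0$-rainbow horn: otherwise swapping some $e \in M$ for a pair of disjoint $C_0$-coloured edges would enlarge $M$, contradicting maximality. Writing $y_c^x \in V \setminus V(M)$ for the unique $c$-neighbour of $x$ when $c \in S_x$, the no-horn condition translates as $y_{c_1}^x = y_{c_2}^{m(x)}$ for every pair of distinct $c_1 \in S_x, c_2 \in S_{m(x)}$. A short argument, playing two distinct colours from each side against each other, shows that under mild non-degeneracy conditions on $S_x$ and $S_{m(x)}$ (for instance whenever $\min(|S_x|, |S_{m(x)}|) \ge 3$), all the $y_c^x$ and $y_c^{m(x)}$ collapse to a single concentration point $y_e \in V \setminus V(M)$; consequently the pair $\{x, y_e\}$ is repeated in at least $|S_x|$ colours of $C_0$ and $\{m(x), y_e\}$ in at least $|S_{m(x)}|$.

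To assemble these concentration pairs into the desired auxiliary matching, I would first apply a Markov-type argument to the sum in the display above: for $r$ chosen sufficiently large in terms of $t$ and $\delta$, all but at most $\delta n / 3$ of the $M$-edges $e$ satisfy $|S_x| + |S_{m(x)}| \ge 2t+2$ together with $\min(|S_x|, |S_{m(x)}|) \ge 3$, so each such $e$ supplies a candidate pair of multiplicity $\ge t$ in $C_0$. Collisions between candidates are constrained by $\sum_{z \in V(M)} d(z, y) \le 2k$ for each $y \in V \setminus V(M)$ (any $c \in C_0$ contributes at most $2$ $c$-neighbours to a given $y$), which limits the number of good $M$-edges with concentration point $y$ to $\le 2k/t$. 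A random selection of one candidate per concentration point, controlled by Lemma \ref{chernoff}, then yields a $t$-auxiliary matching of size at least $(1-\delta)n$, provided $k$ is not too large relative to $t$ and $\delta$.

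The main obstacle is the regime in which $k$ grows with $n$: here the collision bound is too weak for the greedy/probabilistic selection to produce $(1-\delta)n$ distinct concentration vertices $y_e$ on its own. My plan for this range is to use the concentration structure together with Lemma \ref{lema} to construct a multi-step augmentation of $M$ (swapping several $M$-edges for pairs of $C_0$-coloured edges meeting at shared concentration points), contradicting maximality of $M$ once $k$ exceeds a suitable constant $r = r(\delta, t)$. The fine tuning — choosing $\delta'$ small in terms of $\delta$ and $t$, and $r$ large enough in terms of $\delta, t$ — ties the two regimes together to give the quantitative statement of the lemma.
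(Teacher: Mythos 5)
Your approach differs substantially from the paper's. The paper restricts to the $\geq r/2$ colours $c\in C_0$ for which $M$ has few $c$-horns (via Lemma \ref{hornslemma}), which yields a \emph{pointwise} conclusion: for each such $c$, the matching $M_c$ intersects all but $O(n/r)$ edges of $M$. It then deletes ``bad'' edges and, by averaging, finds a \emph{single} colour $c$ for which $M_c$ itself is (essentially) the desired $t$-auxiliary matching. You instead try to assign a concentration point $y_e$ to each edge $e\in M$, harvesting an auxiliary edge from each such point, without fixing one colour. The no-horn collapse of the $y_c^x$ to a single $y_e$ when $\min(|S_x|,|S_{m(x)}|)\geq 3$ is a correct and nice observation (it is implicit in the paper), but the surrounding scaffolding does not hold up.

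There are two genuine gaps. First, the ``Markov-type argument'' in your third paragraph does not work. The identity $\sum_x |S_x| = \sum_c |M_c|$ together with $|S_x|\leq k$ only controls the \emph{total deficit} $\sum_e\bigl(2k-|S_x|-|S_{m(x)}|\bigr) \leq k\bigl((1+\delta')n-4k\bigr)$. For $k$ of moderate size (say $k=\Theta(t)$ or $k=\Theta(r)$ with $k\ll n/4$), this allows a constant fraction of the $M$-edges $e$ to have $|S_x|+|S_{m(x)}|<2t+2$; the bound only becomes useful when $k$ is within $O(\delta n)$ of $(1+\delta')n/4$. In particular, taking $r$ large does not fix this: the distribution of $|S_x|$ can be highly skewed, and nothing in your first paragraph rules that out. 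What the paper uses instead is the horn count: once a colour $c\in C_0$ has few $c$-horns, $M_c$ must meet $(1-O(1/r))|M|$ edges of $M$, which is a pointwise statement that averaging alone cannot give you. Second, your collision bound $\leq 2k/t$ per concentration point yields at most about $mt/(2k)$ usable points, so your assembly step only produces a matching of size $(1-\delta)n$ when $k=O(t)$. Since the previous step only applies when $k$ is near $n/4$, the two regimes never overlap, and your proposed ``multi-step augmentation'' for the remaining range is not carried out (nor is it clear it can be, since the lemma is about $(n,(3-\delta')n)$-multigraphs where $k=n-|M|$ can legitimately be well above any constant, so one should not expect a contradiction with maximality there). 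As it stands the proposal does not prove the lemma for any range of $k$ relevant to the statement.
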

\begin{proof}
Take $r := \max \big(10/ \delta', 2t/\sqrt{\delta'}\big)$ and let $\delta'$ be sufficiently small as a function of $t$ and $\delta$. Let $C_0$ denote the set of at least $r$ colours not used in $M$ and note that by Lemma \ref{amclaim1} (applied with an empty auxiliary matching), there is no $C_0$-coloured edge outside the matching $M$ and no $C_0$-rainbow horn in the same matching. Therefore, Lemma \ref{hornslemma} implies that at most half of the colours $c \in C_0$ are such that $M$ has at least $4|M|/r$ many $c$-horns. Let us then redefine $C_0$ in order to only include the at least $r/2$ many colours for which this does not occur. 

Now, each $c \in C_0$ has at least $(3-\delta')n$ vertices in its colour class and there is no $c$-coloured edge contained in $V \setminus V(M)$. Therefore, by Lemma \ref{triangles}, there is a $c$-coloured matching $M_c \subseteq E[V \setminus V(M),V(M)]$ of size $(1-\delta')n$. Furthermore, since $M$ has at most $4|M|/r$ many $c$-horns, $M_c$ must intersect at least $(1-\delta')n - 4|M|/r \geq (1-2\delta')n$ edges of $M$. We can now take a subset $C'_0 \subseteq C_0$ of $\lfloor t/\sqrt{\delta'} \rfloor \leq r/2$ colours for which there is a subset $M' \subseteq M$ of size at least $(1-2t\sqrt{\delta'})n$ so that every edge in $M'$ intersects all matchings $M_c$ with $c \in C'_0$. Let an edge $e \in M'$ be a \emph{bad edge} if it has an endpoint $u$ such that there are at least ten distinct vertices $z \notin V(M)$ with $uz \in \bigcup_{c \in C_0} M_c$. Let $M'_{\text{bad}}$ denote the set of bad edges and note the following.
\begin{claim*}
$|M'_{\text{bad}}| \leq 2 \delta' |M|$.
\end{claim*}
\begin{proof}
Suppose otherwise and let $V_{\text{bad}} \subseteq V(M'_{\text{bad}}) $ denote the set of endpoints implying the badness of the edges in $M'_{\text{bad}}$. Letting $A := m(V_{\text{bad}})$, we can check that there cannot exist a $C_0$-coloured edge contained in $A \cup (V \setminus V(M))$. Indeed, for contradiction sake, let $e$ be such an edge and let $\{e_1,e_2\} \subseteq M'_{\text{bad}}$ be a set containing all edges which intersect $e$. Since $e_1, e_2$ are bad edges, note that there exist edges $e'_1 \in E[e_1 \cap V_{\text{bad}}, V \setminus V(M)]$ and $e'_2 \in 
E[e_2\cap V_{\text{bad}},V \setminus V(M)] $ so that $\{e,e'_1,e'_2\}$ forms a $C_0$-coloured rainbow matching. In turn, then $(M \setminus \{e_1,e_2\}) \cup \{e,e'_1,e'_2\}$ is a larger rainbow matching than $M$.

We can now use Lemma \ref{triangles} and have that for each $c \in C_0$ there is a $c$-coloured matching $L_c \subseteq E[V(M \setminus M'_{\text{bad}}),A \cup (V \setminus V(M))]$ of size at least $(1-\delta')n \geq (1-\delta')|M|$. Since by assumption, $|M \setminus M'_{\text{bad}}| \leq (1-2\delta')|M|$, each $c \in C_0$ is such that there are at least $\delta' |M| > 2 |M|/ |C_0|$ edges $e \in M \setminus M'_{\text{bad}}$ which intersect two edges of $L_c$. Therefore, there is an edge $e \in M \setminus M'_{\text{bad}}$ for which there are at least three colours $c$ such that $L_c$ intersects $e$ in two edges. By using a similar argument as for Observation \ref{observ}, there must then exist a $C_0$-coloured rainbow matching of two edges going from $e$ to $A \cup (V \setminus V(M))$. In turn, similarly to the last paragraph, it is easy to check that this contradicts the maximality of $M$. 

\end{proof}
Given the above claim, let us now delete the bad edges from $M'$ so that we still have $|M'| \geq \left(1-4t\sqrt{\delta'} \right)n$. To finish, we make a final observation implied by the definition of a bad edge. Recall first that the set $C'_0 \subseteq C_0$ is such that each edge of $M'$ intersects all matchings $M_c$ with $c \in C'_0$. Now, since we have deleted all bad edges, it must be the case that for each edge $e \in M'$, there are at most $20t$ colours $c \in C'_0$ such that there is an edge $uz \in M_c$ with $u \in e$, $z \notin V(M)$ and such that $uz$ is repeated in at most $t$ many colours in $C'_0$. For each such edge $e$, let $C_e \subseteq C'_0$ denote this set of colours. There must then exist a colour $c \in C'_0$ which belongs to at most 
$$\frac{1}{|C'_0|} \cdot \sum_{e \in M'} |C_e| \leq \frac{1}{|C'_0|} \cdot 20t |M'| \leq 40 \sqrt{\delta'} \cdot |M'|$$
many sets $C_e$. Finally, note that the matching $N$ formed by the edges of $M_c$ which intersect those edges $e \in M'$ such that $c \notin C_e$ forms a $t$-auxiliary matching, which is of size at least
$$\left(1 - 40 \sqrt{\delta'} \right)|M'| \geq \left(1 - 40 \sqrt{\delta'} \right) \left(1-4t\sqrt{\delta'} \right)n \geq (1-\delta)n.$$
\end{proof}
Next, given a $t$-auxiliary matching $N$ for a rainbow matching $M$, define the subset $N_{\alpha} \subseteq N$ to be the set of edges $v_e x_e \in N$ such that the edge $v_e m(x_e)$ is repeated in at most $\alpha |C_N|$ many colours of $C_N$, and define $M_{N_{\alpha}}$ as expected. The lemma below will tell us that these sets cannot be very large.
\begin{lem}\label{amlemma2}
For every $\gamma > 0$ and $\alpha > 0$, the following holds for all sufficiently large $n$. Let $G$ be an $(n,\left(3-\gamma \right)n)$-multigraph and $M$ a maximal rainbow matching in $G$ with a $7$-auxiliary matching $N$ of size at least $(1-\gamma)|M|$. Then, $|N_{\alpha}| \leq \left(\frac{20 \gamma}{1- \alpha} \right) n $. 
\end{lem}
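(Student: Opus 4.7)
The plan is a double-counting argument on the set
\[
\mathcal{P} \;=\; \{(e,c) \in M_N \times C_N : v_e m(x_e) \text{ is } c\text{-coloured}\}.
\]
Writing $\sigma(e) = |\{c \in C_N : v_e m(x_e) \text{ is } c\text{-coloured}\}|$, the definition of $N_\alpha$ immediately gives the upper bound
\[
|\mathcal{P}| \;=\; \sum_{e \in M_N} \sigma(e) \;\leq\; \alpha|C_N||N_\alpha| + |C_N|(|M_N|-|N_\alpha|) \;=\; |C_N|\bigl(|M_N| - (1-\alpha)|N_\alpha|\bigr).
\]
It therefore suffices to prove the complementary lower bound $|\mathcal{P}| \geq |C_N|(|M_N| - 20\gamma n)$, which combined with the above yields $(1-\alpha)|N_\alpha| \leq 20\gamma n$ and hence the lemma.

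For the lower bound I would fix $c \in C_N$, let $e_c \in M_N$ be the unique edge of $M$ with $c(e_c) = c$, and analyse the $c$-colour class via three ingredients. First, Lemma~\ref{amclaim1} forbids any $c$-edge inside $U := V \setminus (V(N) \cup V(M\setminus M_N)) = A_N \cup R$, where $A_N := \{m(x_e) : e \in M_N\}$ and $R := V \setminus V(M) \setminus V_N$; consequently every clique of the $c$-class meets $U$ in at most one vertex. Second, Lemma~\ref{amclaim2} applied with $z = m(x_f)$ forbids any $c$-coloured edge $v_e m(x_f)$ when $e \neq f$ and $e \neq e_c$, so within the $c$-class the only $V_N$-to-$A_N$ edges (other than those incident to $v_{e_c}$) are the diagonal edges $v_e m(x_e)$. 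Third, the maximality of $M$ rules out any $c$-coloured edge $v_e v_f$ with $e,f$ distinct from each other and from $e_c$: otherwise $(M\setminus\{e_c\}) \cup \{v_e v_f, v_{e_c} x_{e_c}\}$, where $v_{e_c} x_{e_c}$ is given one of its $\geq 7$ colours in $C_0$, is a strictly larger rainbow matching.

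Combining these three facts, every clique of the $c$-class containing some $m(x_e)$ with $e \neq e_c$ has its $V_N$-vertices contained in $\{v_e, v_{e_c}\}$; it is therefore either a diagonal-pair clique contributing to $\mathcal{P}_c$, or else it uses at least one vertex of $V(M) \setminus A_N$ or the single special vertex $v_{e_c}$. A further maximality argument, now using Lemma~\ref{lema} for $|M'|\leq 2$ and the $7$-auxiliary structure, bounds the $R$-contribution to the $c$-class: any $R$-vertex $w$ in the $c$-class provides a $c$-edge into $V(M)$, which combined with $v_{e_c}x_{e_c}$ (and the newly-freed $v_f m(x_f)$ for an appropriate $f$) yields a larger rainbow matching. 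Writing $a_c := |c\text{-class}\cap A_N|$ and $w_c := |c\text{-class}\cap (V(M)\setminus A_N)|$, this accounting gives $|\mathcal{P}_c| \geq a_c - w_c - O(1)$. Summing over $c \in C_N$ and exploiting $\sum_c |c\text{-class}| \geq (3-\gamma)n|C_N|$ together with $|V(M)| \leq 2n$ and $|V_N| = |N| \leq n$ (so that the average vertex in $A_N$ appears in almost all $c$-classes while the average $V(M) \setminus A_N$-vertex has only $O(\gamma n)$ deficit across colours) produces the required lower bound on $|\mathcal{P}|$.

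The main obstacle is executing the maximality swaps uniformly in $c$: the $R$-vertex and $V(M)\setminus A_N$ arguments each need enough free $C_0$-colours to absorb the replacements, which is exactly what the $t=7$ auxiliary structure supplies. The constant $20$ in $20\gamma n$ is chosen to absorb the slack from this averaging and from a handful of boundary cases (for instance, the special clique containing $v_{e_c}$) where the structural inequality is loose.
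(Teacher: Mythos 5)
Your upper bound on $|\mathcal{P}|$ is correct, and the three structural ingredients you list (Lemma~\ref{amclaim1} forbidding $c$-edges inside $A_N \cup R$, Lemma~\ref{amclaim2} restricting $V_N$-to-$(A_N\cup R)$ edges, and the direct maximality swap forbidding $v_e v_f$ edges) are valid. But the lower bound $|\mathcal{P}| \geq |C_N|(|M_N| - 20\gamma n)$, which is the heart of the argument, does not follow from them, and in fact is false.

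The per-colour bound $|\mathcal{P}_c| \geq a_c - w_c - O(1)$ is correct, but when you sum over $c$ the accounting falls apart. You need $\sum_{c} w_c = O(\gamma n |C_N|)$, and you claim the average $V(M)\setminus A_N$ vertex ``has only $O(\gamma n)$ deficit across colours.'' That is only true for the $V(M\setminus M_N)$ part, which has size $O(\gamma n)$; the $X_N$ part of $V(M)\setminus A_N$ has size $|N| \approx n$, and nothing prevents each $x_f$ from appearing in essentially every colour class in $C_N$. So $\sum_c w_c$ can be of order $n|C_N|$, which swamps the target. More fundamentally, consider (for simplicity with $|M|=|N|=n$, $V(M\setminus M_N)=\emptyset$) a $c$-class consisting of $n/2$ diagonal pairs $\{m(x_e),v_e\}$ for $e$ in some set $E$, together with $n/2$ pairs $\{v_f,x_{\sigma(f)}\}$ for $f\notin E$ with $\sigma(f)\in E$, and $n/2$ pairs $\{m(x_g),x_{\tau(g)}\}$ for $g\notin E$ with $\tau$ a derangement of $M_N\setminus E$. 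This has $3n \geq (3-\gamma)n$ vertices, obeys your three constraints (one can even arrange that no $N$-edge is a $c$-horn), yet $|\mathcal{P}_c| = n/2$, far below $|M_N| - 20\gamma n$. If this structure is repeated with varying $E$ across $c\in C_N$, every pair $v_e m(x_e)$ is repeated in about $|C_N|/2$ colours, so $|N_\alpha|=0$ and the lemma holds -- but $|\mathcal{P}|\approx n|C_N|/2$ violates your claimed lower bound. Thus the inequality you are trying to establish is not just unproven, it is unattainable.

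The paper takes a genuinely different route: rather than lower-bounding $|\mathcal{P}|$ directly, it \emph{assumes} $|N_\alpha|$ is large and derives a contradiction. It first deletes $c$-edges at $v_{e_c}$ and at $V(M\setminus M_N)$ (so that the residual colour classes live entirely inside $V(M_N)\cup(V\setminus V(M))$), then uses the largeness of $N_\alpha$ to extract, for many colours $c$, a large set $A_c\subseteq A_N$ with no $c$-edges in $A_c\cup(V\setminus V(M))$. Lemma~\ref{triangles} then produces matchings $L_c$ that double-cover many edges of $M_N$, and a pigeonhole over three such colours followed by a careful case analysis yields a $C_N$-rainbow matching contradicting Lemma~\ref{lema}. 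The hypothesis that $N_\alpha$ is large is essential to obtaining the sets $A_c$ -- it is precisely the information that your direct counting discards.
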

\begin{proof}
For contradiction sake, suppose otherwise. First, for each colour $c \in C_N$, delete the $c$-coloured edges which intersect the vertex $v_e$ where $e \in M_N$ is such that $c(e) = c$. Secondly, delete those $c$-coloured edges which intersect $V(M \setminus M_N)$. Note that after these deletions, we can delete some final $c$-edges so that $c$ is now a disjoint union of triangles and edges with in total at least $(3-\gamma)n - 3 - 3 \cdot |V(M \setminus M_N)| \geq (3-\gamma)n - 3 - 6 \gamma |M| \geq (3-8 \gamma)n$ many vertices. In turn, because of the first deletion process and the maximality of $M$, note that Lemma \ref{lema} implies that there is now no $C_N$-coloured edge contained in $V \setminus V(M)$ - indeed, suppose $e$ is such an edge and let $M' \subseteq M_N$ be the subset of at most two edges $f$ such that $v_f$ intersects $e$; then, since the first deletion process implies that none of these edges have the same colour as $e$, we have that $ M' \cup e$ forms a rainbow matching of size $|M'|+1$ which contradicts Lemma \ref{lema}.

Now, let $A := \{m(x_e) : e \in M_N\}$. By Lemma \ref{amclaim1}, there is no $C_N$-coloured edge contained in $A$ and again because of the first deletion process, Lemma \ref{amclaim2} implies that any $C_N$-coloured edge of the form $m(x_e)y$ for $y \notin V(M)$ must be such that $y = v_e$. Therefore, by the definition of $N_{\alpha}$, there are at most $\alpha|C_N|$ many $C_N$-coloured edges of that form for each $e \in M_{N_{\alpha}}$. A simple counting argument then gives the following.
\begin{claim*}
There exist at least $(1-\alpha) |C_N|/2$ many colours $c \in C_N$ for which there exists a subset $A_c \subseteq A$ of size at least $(1-\alpha) |N_{\alpha}|/2$ such that there are no $c$-coloured edges contained in $A_c \cup (V \setminus V(M))$. 
\end{claim*}
\begin{proof}
For each edge $e \in M_{N_{\alpha}}$, let $C_e \subseteq C_N$ denote the set of at least $(1-\alpha)|C_N|$ many colours which do not appear in the edge $v_e m(x_e)$. Let $C'_N \subseteq C_N$ denote the set of colours which belong to at least $(1-\alpha) |N_{\alpha}|/2$ many sets $C_e$. Then, we must have
$$(1-\alpha)|C_N||N_{\alpha}| \leq \sum_{e \in M_{N_{\alpha}}} |C_e| \leq |C_N| \cdot (1-\alpha) |N_{\alpha}|/2 + |C'_N| \cdot |N_{\alpha}|$$
and thus, $|C'_N| \geq (1-\alpha) |C_N|/2$. Now, note that from the previous discussion, we have that for each $c \in C_N$, the possible $c$-coloured edges contained in $A \cup (V \setminus V(M))$ must be of the form $v_e m(x_e)$ for some $e \in M_N$. Therefore, letting $A_c \subseteq A$ denote the set of vertices $m(x_e)$ with $e \in M_{N_{\alpha}}$ and $c \in C_e$, we have that there is no $c$-coloured edge contained in $A_c \cup (V \setminus V(M))$. We are then done since $|A_c| \geq (1-\alpha) |N_{\alpha}|/2$ for each $c \in C'_N$.
\end{proof}
Let $C'_N \subseteq C_N$ denote the set of colours given by the above claim. From Lemma \ref{triangles} and the second deletion process done at the start of the proof, there exists a $c$-coloured matching $L_c \subseteq E[V(M_N) \setminus (A_c \cup m(A_c)),A_c \cup (V \setminus V(M))]$ of size $(1-8\gamma)n$ for each $c \in C'_N$. Since $|A_c| \geq (1-\alpha) |N_{\alpha}|/2 \geq 10\gamma n$, there are at least $2 \gamma n$ edges of $M_N$ which intersect two edges of $L_c$. Since $|C'_N| \geq (1-\alpha) |C_N|/2 \geq (1-\alpha) |N_{\alpha}|/2  \geq 10\gamma n$ and $10 \gamma n{2 \gamma n \choose 50} > 3{n \choose 50}$ (provided that $n$ is large enough), it is easy to note that there exists three colours $c_1,c_2,c_3 \in C'_N$ such that there are at least 50 edges in $M_N$ each intersecting two edges of each $L_{c_i}$. By using the simple fact that a family of three matchings of size two contains a rainbow matching of size two, there then exist two of those colours, say $c_1,c_2$, for which there are at least 15 edges $e \in M_N$ each such that its endpoints are matched to $A \cup (V \setminus V(M))$ by one edge of $L_{c_1}$ and one edge of $L_{c_2}$. 

To finish, we claim that this contradicts the maximality of $M$. Indeed, let $e_1,e_2 \in M_N$ be the edges in $M$ of colours $c_1,c_2$ and let $S:= e_1 \cup e_2 \cup \{v_{e_1},v_{e_2} \}$. By the conclusion of the last paragraph, there must exist an edge $e \in M_N$ matched to $A \cup (V \setminus V(M))$ by edges $e'_1 \in L_{c_1}$ and $e'_2 \in L_{c_2}$ so that $e'_1,e'_2$ are disjoint to $S$. Let $e'_1$ be the edge with $m(x_e)$ as one of its endpoints. Note that its other endpoint must be a vertex $v_f$ for some $f \in M_N \setminus \{e_1,e_2\}$, as otherwise this edge, which is $C_N$-coloured, would be contained outside $V(N) \cup V(M \setminus M_N)$, which contradicts Lemma \ref{amclaim1}. Furthermore, if $f \neq e$, then the edges $f$ and $e'_1$ form a rainbow matching and thus, $x_f v_f$ is a $C_N$-rainbow horn in the matching $N \cup (M \setminus M_N)$, which also contradicts the same lemma - thus, $e'_1 = v_e m(x_e)$. Now, letting $e'_2 = x_e z$, note that if $z \in A$ or $z \in V \setminus (V(M) \cup V(N))$, we have that $v_e x_e$ is another rainbow horn which contradicts Lemma \ref{amclaim1}. Hence, $z = v_{f}$ for some $f \in M_N \setminus \{e_1,e_2\}$. Then, by taking $M'=\{e,f\} \subset M_N$ one can check that the rainbow matching $\{e'_1,e'_2,f\}$ contradicts Lemma \ref{lema}.
\end{proof}
To finish the section, we conclude with the following quick corollary of Lemmas \ref{amlemma} and \ref{amlemma2}.
\begin{cor}\label{corollary}
For all $\delta > 0$ and $t\geq 7$, there exist $\delta' > 0$ and $r$ such that the following holds for all sufficiently large $n$. Let $G$ be an $(n,\left(3-\delta' \right)n)$-multigraph, $M$ a maximal rainbow matching in $G$ and suppose that $|M| < n-r$. Then, there exists a $t$-auxiliary matching $N$ for $M$ of size at least $(1-\delta)n$ with $N_{1/4} = \emptyset$. 
\end{cor}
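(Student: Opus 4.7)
The plan is to combine Lemmas \ref{amlemma} and \ref{amlemma2}: first produce a $t$-auxiliary matching $N_0$ of nearly optimal size via Lemma \ref{amlemma}, then prune away the edges lying in $(N_0)_{\alpha}$ for some $\alpha$ strictly larger than $1/4$. The subtlety is that membership in $N_\alpha$ is defined relative to the colour set $C_N$ of the \emph{current} matching, so removing edges simultaneously lowers the threshold $\alpha|C_N|$; I need to pick $\alpha$ with enough slack that the surviving edges remain rich in colour repetitions even with respect to the shrunken $C_N$.

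Concretely, given $\delta > 0$ and $t \geq 7$, I would first fix a sufficiently small $\gamma > 0$ --- it will suffice that $\gamma < \min(1/361,\,\delta/31)$ --- then feed $(\gamma, t)$ into Lemma \ref{amlemma} to obtain parameters $\delta_1'$ and $r_1$, and set $\delta' := \min(\delta_1',\gamma)$ and $r := r_1$. For any $(n,(3-\delta')n)$-multigraph $G$ and maximal rainbow matching $M$ with $|M| < n-r$, Lemma \ref{amlemma} yields a $t$-auxiliary matching $N_0$ with $|N_0| \geq (1-\gamma)n$. Since $N_0$ is then also a $7$-auxiliary matching of size at least $(1-\gamma)|M|$, applying Lemma \ref{amlemma2} with $\alpha = 1/3$ gives $|(N_0)_{1/3}| \leq 30\gamma n$. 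Define $N := N_0 \setminus (N_0)_{1/3}$; this is automatically a $t$-auxiliary matching (the defining conditions are closed under taking subsets) of size at least $(1-31\gamma)n \geq (1-\delta)n$.

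It remains to verify $N_{1/4} = \emptyset$. Since $M$ is rainbow, $|C_N| = |N|$ and $|C_{N_0}| = |N_0|$, so $|C_{N_0} \setminus C_N| \leq |(N_0)_{1/3}| \leq 30\gamma n$. For any $v_e x_e \in N$, the fact that $v_e x_e \notin (N_0)_{1/3}$ means $v_e m(x_e)$ is repeated in more than $|C_{N_0}|/3$ colours of $C_{N_0}$, and hence in more than $|C_{N_0}|/3 - 30\gamma n$ colours of $C_N$. Using $|C_{N_0}| \geq (1-\gamma)n$ and $\gamma < 1/361$, this count exceeds $|C_{N_0}|/4 \geq |C_N|/4$, so $v_e x_e \notin N_{1/4}$. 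I expect no real obstacle here beyond parameter bookkeeping: all the combinatorial work is already encapsulated in the two invoked lemmas, and the only thing to watch out for is choosing $\alpha$ sufficiently bounded away from $1/4$ so that the pruning losses cannot push the surviving edges into the new $N_{1/4}$.
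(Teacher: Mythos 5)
Your proposal is correct and follows essentially the same route as the paper: apply Lemma \ref{amlemma} to get a large $t$-auxiliary matching, bound $|N_\alpha|$ for some $\alpha > 1/4$ via Lemma \ref{amlemma2}, prune those edges, and then use the identity $|C_{N_0} \setminus C_N| = |(N_0)_\alpha|$ to show the survivors still have enough colour repetitions relative to the shrunken $C_N$. The only difference is your choice of $\alpha = 1/3$ versus the paper's $\alpha = 1/2$, which leads to slightly different (but equally valid) constants in the bookkeeping.
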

\begin{proof}
Let us assume without loss of generality that $\delta < 1/5$ and let $\delta_0 = \delta/41$ and note that from Lemmas \ref{amlemma} and \ref{amlemma2}, there is $\delta'$ and $r$ which implies the existence of a $t$-auxiliary matching $N'$ for $M$ of size at least $(1-\delta_0)n$ with $|N'_{1/2} | \leq 40 \delta_0 n$. Let $N := N' \setminus N'_{1/2}$ which is also a $t$-auxiliary matching, and of size at least $(1-41 \delta_0)n = (1-\delta)n$. Then, for each edge $e \in M_N$, which is then not in $M_{N'_{1/2}}$, we have, since $|C_{N'} \setminus C_N| = |N'_{1/2}|$, that the pair $m(x_e) v_e$ is repeated in at least $|N'|/2 - |N'_{1/2}| > (1/2 - \delta_0/2 - 40 \delta_0)n > n/4$ many $C_N$-colours. Therefore, $N_{1/4} = \emptyset$. 
\end{proof}
\subsection{An outline of the proof}
Although the proof of Theorem \ref{mainthm} will be relatively short, we give a brief overview of the main ideas. The first crucial thing to observe is that the Grinblat problem behaves differently when one restricts the edge-multiplicity of the $(n,v)$-multigraph. For example, it was shown in \cite{CPS2021+} (improving upon results from \cite{correia2020full}) that any $(n,2n+2m+o(n))$-multigraph with edge-multiplicity at most $m$ contains a rainbow matching using all the colours. This indicates that in general, lower multiplicity will make it easier to find such a rainbow matching. 

Our first step in the proof of Theorem \ref{mainthm} will be in this direction. We will show that in any $(n,v)$-multigraph with multiplicity at most $(1-\delta)n$ and with $v$ close enough to $3n$, we can always find a rainbow matching of size $n$. In order to prove this, we will first show that one can find a rainbow matching of size $n-f(\delta)$, which is a quick corollary of Lemmas \ref{amlemma} and \ref{amlemma2}, and then use the sampling trick introduced in \cite{CPS2021+} to transform this into a result giving a full rainbow matching. 

Next, given an $(n,3n-2)$-multigraph $G$ with some edges of multiplicity larger than $(1-\delta)n$, we iteratively delete $r$ such pairwise disjoint edges (along with their endpoints), for some large constant $r$ and small enough $\delta$ so that $r \leq 1/10\delta$. The resulting graph $G'$ is now not necessarily a $(n,3(n-r)-2)$-multigraph, but we can choose a set $C'$ of at least $(1-r\delta)n \geq 0.9n$ colours which were repeated in every edge belonging to the matching that was deleted. Note that each one of the colours $c \in C'$ does have at least $3(n-r)-2$ vertices in its colour class in $G'$. Now we only need to find a rainbow matching of size $n-r$ in $G'$ such that the colours not used in it belong to $C'$ - after we can choose colours from $C'$ for the edges of the deleted matching to get a rainbow matching of size $n$ in $G$. In fact, as $C'$ is very large and each colour class has size much larger than $2n$, we will be able to transform any rainbow matching of size $n-r$ into one that does not use colours from $C'$.

Hence, finding a rainbow matching of size $n-r$ in $G'$ is the main part of the proof. In fact, just from the lemmas given in this section, it is relatively easy to establish a bound which is only by one more than Grinblat's conjecture. Indeed, suppose we started with $G$ being a $(n,3n-1)$-multigraph, so that all colours in $C'$ have at least $3(n-r)-1$ vertices in their colour class in $G'$. Take a maximal rainbow matching in $G'$ with $|M| < n-r$. By Lemma \ref{amlemma}, there is a $7$-auxiliary matching $N'$ for $M$ of size, say, at least $0.9n$. Then $|C_{N'} \cap C'| \geq 0.8n$. 
So we can restrict to an auxiliary matching $N \subseteq N'$ of size $0.8n$ such that $C_N \subset C'$. Furthermore, by Lemmas \ref{triangles} and \ref{amclaim1}, for each colour $c \in C_N$, there is a $c$-coloured matching $M_c$ of size $|M|+2$ consisting of edges going from the matching $N \cup (M \setminus M_N)$ to the outside. Also, by Lemma \ref{amclaim1} there cannot exist a $C_N$-rainbow horn in the matching $N \cup (M \setminus M_N)$ and thus, Observation \ref{observ} implies that every edge in $N \cup (M \setminus M_N)$ must intersect at most $|C_N|+1$ many edges of $\bigcup_{c \in C_N} M_c$. By double-counting, we must then have $|C_N| \cdot (|M|+2) = \sum_{c \in C_N} |M_c| \leq |M| \cdot (|C_N| + 1) $ and so, $0.8n \leq |N| = |C_N| \leq |M|/2 \leq n/2$, which gives a contradiction. 

In order to prove Theorem \ref{mainthm}, we will need an additional observation which we discuss in the next section - it will consist of considering another 'auxiliary' matching.
\section{Proof of Theorem \ref{mainthm}}
As discussed in the proof outline, we will first need to prove a result concerning $(n,v)$-multigraphs with bounded edge-multiplicity.  
\begin{prop}\label{weakboundedmult}
For all $\delta > 0$ there exists $\delta' > 0$ and $r$ such that for all sufficiently large $n$, every $\left(n,(3-\delta')n \right)$-multigraph with multiplicity at most $(1-\delta)n$ contains a rainbow matching of size $n-r$.
\end{prop}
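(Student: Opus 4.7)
The plan is to argue by contradiction: suppose some maximal rainbow matching $M$ in $G$ has $|M|<n-r$, and derive a contradiction from the multiplicity hypothesis together with \Cref{amlemma} and \Cref{amlemma2}. The guiding idea is that the parameter governing $N_\alpha$ measures how densely the ``opposite'' pairs $v_em(x_e)$ are recoloured within $C_N$, while the multiplicity bound $(1-\delta)n$ directly caps this density.

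First, given $\delta>0$, I would fix an auxiliary parameter $\gamma$ of order $\delta/50$ and apply \Cref{amlemma} with its internal $\delta$ replaced by $\gamma$ and with $t=7$. This yields constants $\delta''>0$ and $r$ so that, for large enough $n$, whenever a maximal rainbow matching $M$ has $|M|<n-r$ it admits a $7$-auxiliary matching $N$ of size at least $(1-\gamma)n$. Setting $\delta':=\min(\delta'',\gamma)$ ensures that $G$ lies simultaneously in the regimes of \Cref{amlemma} and of \Cref{amlemma2}, since $|N|\geq (1-\gamma)n\geq (1-\gamma)|M|$.

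Next, I would invoke \Cref{amlemma2} with parameters $\gamma$ and $\alpha:=(1-\delta/2)/(1-\gamma)$, a threshold placed just above the multiplicity quotient $(1-\delta)n/|C_N|$. If some $e\in M_N\setminus M_{N_\alpha}$ existed, the pair $v_em(x_e)$ would be repeated in strictly more than $\alpha|C_N|\geq \alpha(1-\gamma)n = (1-\delta/2)n$ colours of $C_N$, exceeding the multiplicity bound $(1-\delta)n$ on any pair of vertices. Hence $M_N=M_{N_\alpha}$, so $N=N_\alpha$, which together with \Cref{amlemma2} gives $|N|=|N_\alpha|\leq \tfrac{20\gamma}{1-\alpha}n$.

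Combining this upper bound on $|N|$ with the lower bound $|N|\geq (1-\gamma)n$ reduces, after substituting $\alpha$, to the inequality $\delta/2-\gamma\leq 20\gamma$, which fails for $\gamma=\delta/50$ and thus yields the desired contradiction. The main obstacle is purely the parameter bookkeeping: one must arrange that $\alpha\leq 1$, that $\alpha|C_N|$ strictly exceeds $(1-\delta)n$ so the multiplicity hypothesis actually bites, and that the size estimates coming from \Cref{amlemma} and \Cref{amlemma2} collide. Once $\gamma$ is chosen sufficiently small relative to $\delta$, all three constraints are compatible and the argument closes immediately.
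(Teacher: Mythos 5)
Your proof is correct and follows essentially the same route as the paper's. Both arguments apply \Cref{amlemma} to get a large $7$-auxiliary matching, then choose an $\alpha$ just large enough that the multiplicity cap $(1-\delta)n$ forces $N=N_\alpha$, and finally collide this with the size bound from \Cref{amlemma2}; the only difference is cosmetic bookkeeping (the paper takes $\gamma=\delta^2$ and $\alpha=1-\delta/2$, while you take $\gamma=\delta/50$ and $\alpha=(1-\delta/2)/(1-\gamma)$, both yielding the same contradiction).
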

\begin{proof}
Without loss of generality, let us assume that $\delta < 1/100$. Let $G$ be a $\left(n,(3-\delta')n \right)$-multigraph with multiplicity at most $(1-\delta)n$ and, for sake of contradiction, let $M$ be a maximal rainbow matching of size at most $n-r$. By Lemma \ref{amlemma}, there exists a sufficiently small $\delta'$ which implies the existence of a $7$-auxiliary matching $N$ for $M$ of size at least $(1-\delta^2)n$. Additionally, by Lemma \ref{amlemma2} (with $\gamma=\delta^2$), it is the case that $|N_{1-\delta/2}| \leq 40 \delta n $. In turn, the multiplicity condition on $G$ implies that for each $e \in M_N$, there exist at most $(1-\delta)n \leq (1-\delta/2)(1-\delta^2)n \leq (1-\delta/2)|C_N|$ many $C_N$-colours repeated in the edge $v_e m(x_e)$. Thus, $N = N_{1-\delta/2}$, which leads to a contradiction since then $(1-\delta^2)n \leq |N| \leq 40\delta n $.
\end{proof}
We can now use the sampling trick introduced in \cite{CPS2021+} to transform the proposition above into the following theorem.
\begin{thm}\label{strongboundedmult}
For all $\delta > 0$ there exists $\delta' > 0$ such that for all sufficiently large $n$, every $\left(n,(3-\delta')n \right)$-multigraph with multiplicity at most $(1-\delta)n$ contains a rainbow matching of size $n$.
\end{thm}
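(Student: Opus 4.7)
Fix $\delta > 0$. The plan is to use the sampling trick of \cite{CPS2021+} to bootstrap Proposition \ref{weakboundedmult} (which only guarantees a rainbow matching of size $n - r$) into the full statement of Theorem \ref{strongboundedmult}.

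First, I would invoke Proposition \ref{weakboundedmult} with the smaller constant $\delta/2$, producing constants $\delta_0 > 0$ and $r \in \mathbb{N}$, and then pick $\delta' > 0$ small enough in terms of $\delta, \delta_0, r$. Given an $(n,(3-\delta')n)$-multigraph $G$ of multiplicity at most $(1-\delta)n$, I would sample a uniformly random subset $C^* \subseteq [n]$ of $s := \lceil 20 r/\delta \rceil$ colours and work with the sub-multigraph $G' := G|_{[n] \setminus C^*}$ obtained by deleting the edges of those colours. A direct computation shows that $G'$ is an $(n-s,(3-\delta_0)(n-s))$-multigraph with multiplicity at most $(1-\delta/2)(n-s)$: the remaining colour classes retain all their vertices, while $s \ll \delta n$ and $\delta' \ll \delta_0$ ensure the requisite inequalities.

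Applying Proposition \ref{weakboundedmult} to $G'$ produces a rainbow matching $M'$ of size $(n-s)-r$ in $G$. The set of missing colours $C_{\mathrm{miss}} \subseteq [n]$ then has size exactly $s+r$ and contains all of $C^*$, and the task reduces to extending $M'$ by $s+r$ pairwise disjoint edges in distinct missing colours, disjoint from $V(M')$. For each missing colour $c$, Lemma \ref{triangles} (applied with $M = M'$) produces a large $c$-coloured matching between $V(M')$ and $V \setminus V(M')$. The randomness of $C^*$ is exploited as follows: for any vertex pair $xy$, the number of its colours belonging to $C^*$ is a hypergeometric variable with mean $s \cdot \mathrm{mult}(xy)/n$, and Lemma \ref{chernoff} gives strong concentration. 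This flexibility should let us set up a Hall-type bipartite matching problem between missing colours and candidate extending edges, and argue that with positive probability over $C^*$ the required system of $s+r$ extending edges exists.

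The main obstacle is this final extension step. The $s$ random missing colours in $C^*$ are straightforward to control thanks to the sampling, but the $r$ \emph{deterministic} missing colours in $[n] \setminus (C^* \cup C(M'))$ are forced upon us by the weak proposition and may behave adversarially. Their placement is severely restricted by Lemmas \ref{amclaim1} and \ref{amclaim2} (no $C_{\mathrm{miss}}$-coloured edge outside $N \cup (M \setminus M_N)$ and no $C_{\mathrm{miss}}$-rainbow horn there), and the proof must combine these structural constraints with the probabilistic flexibility provided by $C^*$ — most likely via rotation/augmentation arguments around $M'$ that trade the easy $C^*$-colours for the hard deterministic ones — to absorb all $r$ bad colours and close the argument.
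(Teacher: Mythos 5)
The paper's version of the sampling trick works with a random \emph{vertex} set, not a random \emph{colour} set, and this difference is exactly what makes the extension step trivial there and a genuine gap in your proposal. The paper samples $S\subseteq V(G)$ with each vertex included independently with probability $p=2n^{-1/4}$, and checks via Lemma \ref{chernoff} that with high probability (i) every colour has at least $\sqrt{n}$ edges inside $G[S]$, and (ii) $G-S$ is still an $(n,(3-2\delta')n)$-multigraph. Applying Proposition \ref{weakboundedmult} to $G-S$ gives a rainbow matching $M$ of size $n-r$, and --- crucially --- $V(M)\cap S=\emptyset$ by construction, since $M$ lives in $G-S$. So any edges picked inside $G[S]$ are automatically disjoint from $V(M)$. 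The last step is then purely greedy: each of the at most $r$ missing colours has $\geq\sqrt{n}\gg 4r$ edges in $G[S]$ and its colour class has maximum degree $2$, so one can pick $r$ pairwise disjoint edges of distinct missing colours inside $G[S]$ one by one. No Hall-type argument and no rotation is needed.

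Your colour-sampling route does not have this automatic disjointness. After deleting $C^*$ and finding $M'$ of size $(n-s)-r$, the matching $M'$ sits inside the \emph{full} vertex set $V(G)$, and there is no reserved region disjoint from $V(M')$ in which the missing colours are guaranteed to live. Indeed, if $M'$ is maximal in $G'$ (and a fortiori if it is maximal in $G$), then \emph{no} missing colour has an edge outside $V(M')$, so every candidate extension edge touches $V(M')$ and you are forced into the rotation/augmentation territory you describe. You acknowledge this yourself (``the main obstacle is this final extension step'') and only gesture at how it might go: a hypergeometric concentration bound on how many colours of a pair $xy$ land in $C^*$, plus structural constraints from Lemmas \ref{amclaim1} and \ref{amclaim2}, plus unspecified rotations to absorb the $r$ ``deterministic'' bad colours. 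That is the entire content of the theorem, not a detail --- the paper's vertex-sampling device exists precisely to make this step disappear. There is also a secondary issue: Lemmas \ref{amclaim1} and \ref{amclaim2} are stated for a \emph{maximal} rainbow matching and a $t$-auxiliary matching $N$; you invoke them without having established that $M'$ is maximal or constructed such an $N$, so even the structural constraints you rely on are not directly available in the form you quote them. As written, this is a genuine gap; switching the randomization from colours to vertices, as the paper does, closes it cleanly.
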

\begin{proof}
Let $G$ be a $(n, (3-\delta')n)$-multigraph with multiplicity at most $(1-\delta)n$. For each colour $c$, let $t_c$ denote the number of triangles in its colour class and $l_c$ the number of edges, so that $3t_c + 2l_c \geq (3-\delta')n$. Let $S \subseteq V(G)$ be a random set obtained by choosing each vertex independently with probability $p = 2n^{-1/4}$. 

For each colour $c$, let $e_c(S)$ be the random variable counting the number of $c$-coloured edges contained in $S$, which is a sum of independent $[0,3]$-valued random variables. Moreover, every component of colour $c$ contributes an edge with probability at least $p^2$ and so, $\mathbb{E}[e_c(S)] \geq p^2(t_c+l_c) \geq p^2 n/2 = 2\sqrt{n}$. 
Thus, by Lemma \ref{chernoff}, we have $\mathbb{P}(X_c \leq \sqrt{n}) \leq e^{-\Omega(\sqrt{n})} = o(n^{-1})$, so by a union bound, we have that with probability $1-o(1)$, all $e_c(S) > \sqrt{n}$. Also for each colour $c$, let $v_c(S)$ denote the number of vertices in the initial colour class of $c$ which belong to $S$. Again using Lemma \ref{chernoff} it is easy to note that $\mathbb{P} \left(v_c(S) \geq 2(3-\delta')np \right) \leq e^{-\Omega_{\delta}(n^{3/4})} = o(n^{-1})$.
Moreover, since every vertex counted in $v_c(S)$ belongs to one clique of the original colour class, by deleting $S$ we might destroy at most $v_c(S)$ such cliques which cover at most $3v_c(S)$ vertices.
Therefore, by union bound, with probability $1-o(1)$, every colour class in $G-S$ is a disjoint union of non-trivial cliques covering in total at least 
$(1-6p)(3-\delta')n \geq (3-2\delta')n$ vertices. 

Now, fix a subset $S$ satisfying all the conditions discussed above. Then, provided that $\delta'$ is sufficiently small, Proposition \ref{weakboundedmult} implies that for some constant $r$ there is a rainbow matching $M$ in $G-S$ of size at least $n-r$ and every colour has at least $\sqrt{n}$ edges in $G[S]$. Let $C_0$ denote the set of colours not used in $M$. Since each colour class in $C_0$ has maximum degree two and at least $\sqrt{n} \gg 4r = 4|C_0|=2\cdot 2 \cdot |C_0|$ edges in $G[S]$, we can greedily find a rainbow matching $N \subseteq G[S]$ which uses all colours in $C_0$. As a result, $M \cup N$ is a full rainbow matching in $G$.
\end{proof}
Now that we have the desired bounded multiplicity result, we will next show that we can indeed find a rainbow matching of size $n-r$ in the situation discussed in the end of the proof outline.
\begin{lem}\label{keylemma}
Let $r$ be sufficiently large, $n$ sufficiently large in terms of $r$ and $G$ a $(n,3n-10r)$-multigraph such that there are at least $0.9n$ many colours with at least $3(n-r) - 2$ vertices in its colour class. Then, $G$ contains a rainbow matching of size $n-r$.
\end{lem}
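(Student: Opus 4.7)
The plan is to argue by contradiction, adapting the outline's $(n,3n-1)$-sketch but using $N_{1/4} = \emptyset$ from Corollary \ref{corollary} as additional structure to close the $\pm 1$ gap. Suppose $G$ has no rainbow matching of size $n-r$ and let $M$ be a maximum rainbow matching, so $|M| \leq n-r-1$. Applying Corollary \ref{corollary} with $t=7$ and a sufficiently small $\delta > 0$ yields a $7$-auxiliary matching $N$ with $|N| \geq (1-\delta)n$ and $N_{1/4} = \emptyset$; setting $C_N' := C_N \cap C'$ we have $|C_N'| \geq 0.8n$. Write $\tilde M := N \cup (M \setminus M_N)$. For each $c \in C_N'$, Lemma \ref{triangles} applied to $\tilde M$ yields a $c$-coloured matching $M_c \subseteq E[V(\tilde M), V \setminus V(\tilde M)]$ of size at least $3(n-r)-2 - 2|M| \geq |M|+1$. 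The crucial construction choice is to force $M_c$ to contain the phantom edge $v_e m(x_e)$ whenever it carries colour $c$: since each clique of the $c$-colour class contains at most one vertex outside $V(\tilde M)$, the greedy step in Lemma \ref{triangles} can always be directed to pick our preferred edge from the clique of $m(x_e)$.

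Next, I would repeat the outline's double-count: writing $k_{f,c}$ for the number of edges of $M_c$ meeting $f \in \tilde M$, we get $\sum_{c \in C_N'} k_{f,c} \leq |C_N'|+1$ in general, and $\leq 4$ if $f$ is a $c$-horn for two distinct $c \in C_N'$ (by Observation \ref{observ} and Lemma \ref{amclaim1}). Letting $a_i$ count edges of $\tilde M$ which are horns for exactly $i$ colours of $C_N'$, the resulting inequality $|C_N'|(a_2+1) \leq a_1 + 4a_2$ forces $a_2 = 0$, $a_1 \geq |C_N'|$, and a total load-deficit of at most $|M| - |C_N'| \leq 0.2n$. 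Thus for most $1$-horn edges $f = v_e x_e$ (with horn colour $c_f$), the sets $C_u := \{c \in C_N' : u \in V(M_c)\}$ satisfy $C_{v_e} \cup C_{x_e} = C_N'$ and $C_{v_e} \cap C_{x_e} = \{c_f\}$.

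The new input is that phantom inclusion forces $|C_{v_e}| = \Omega(n)$ for every $e \in M_N$: indeed $v_e m(x_e)$ carries at least $|C_N|/4 = \Omega(n)$ colours of $C_N$, of which only $O(\delta n + 0.1n)$ lie outside $C_N'$. Given a load-full $1$-horn $f = v_e x_e$, pick $c_1 \in C_{v_e} \setminus \{c_f, c(e)\}$ and $c_2 \in C_{x_e} \setminus \{c_f, c(e)\}$, which automatically satisfy $c_1 \neq c_2$. Writing $v_e w_1 \in M_{c_1}$ and $x_e w_2 \in M_{c_2}$, Lemma \ref{amclaim2} forces $w_1 \neq w_2$: were $w_1 = w_2 = w$, Lemma \ref{amclaim2} (using $c_1 \neq c(e)$) would give $w = m(x_e)$, whence $x_e w = e$ has colour $c(e)$, contradicting $c_2 \neq c(e)$. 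Then $\{v_e w_1, x_e w_2\}$ is a $(C_0 \cup C_N)$-rainbow matching of size $2$ disjoint from $\tilde M \setminus \{f\}$, contradicting Lemma \ref{lema} with $M' = \{e\}$.

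The main obstacle is the \emph{bad} configuration $C_{x_e} \subseteq \{c_f, c(e)\}$, in which the $x_e$-side is obstructed even though $|C_{v_e}| = \Omega(n)$. Load-fullness forces $|C_{v_e}| \geq |C_N'|-1$, and by Lemma \ref{amclaim2} the phantom $v_e m(x_e)$ must be $c$-coloured for every $c \in C_N' \setminus \{c(e)\}$, an extreme rigidity. To finish, I would pick two bad $1$-horns $f_1, f_2$ with distinct horn colours and build a $(C_0 \cup C_N)$-rainbow matching of size $3$ of the form $\{v_{e_1} m(x_{e_1}), v_{e_2} m(x_{e_2}), g\}$ disjoint from $\tilde M \setminus \{f_1, f_2\}$, contradicting Lemma \ref{lema} with $M' = \{e_1, e_2\}$. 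The bridge $g$ is taken as a suitably coloured horn-edge from $x_{e_1}$ or $x_{e_2}$ whose outside endpoint avoids $m(x_{e_1}), m(x_{e_2})$, or as the pair $x_{e_1} x_{e_2}$ itself when it carries an appropriate colour. The ``other auxiliary matching'' alluded to in the outline is precisely the matching $\{v_e m(x_e) : e \in M_N\}$ of highly multi-coloured phantoms, whose rich structure (inherited from $N_{1/4} = \emptyset$) is the engine of this final step; executing the bad-case analysis cleanly is the main work.
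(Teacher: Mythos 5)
Your high-level strategy matches the paper's — use Corollary~\ref{corollary} to get an auxiliary matching with $N_{1/4}=\emptyset$, restrict to $C_N\subseteq C'$, and then double-count the $c$-coloured matchings against the edges they load. But the execution diverges in a way that introduces a genuine gap, and the route you then take is also different from (and more complicated than) the paper's.

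The key difference is the choice of base matching. The paper works directly with the ``phantom'' matching $L:=\{v_em(x_e):e\in M_N\}\cup(M\setminus M_N)$, proves the no-outside-edge and no-rainbow-horn claim for $L$, constructs $L_c\subseteq E[V(L),V\setminus V(L)]$ of size $|L|+1$, and then observes that a phantom edge $e'=v_em(x_e)$, being repeated in $\geq n/20$ colours of $C_N$, cannot receive any edge from $L_{c'}$ for those colours $c'$ once it is a $c$-horn (that would make it a $\{c,c'\}$-rainbow horn). This single observation uniformly cuts the load of phantom edges from $|C_N|+2$ down to $|C_N|$, and the double-count closes in two lines. No case analysis on individual edges is needed. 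You instead keep $\tilde M=N\cup(M\setminus M_N)$ as the base and try to recover the effect of phantoms by steering the construction of $M_c$ and then analysing ``load-full'' $1$-horn edges one by one.

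The concrete gap is in your ``good case.'' You claim $w_1\neq w_2$ for $v_ew_1\in M_{c_1}$, $x_ew_2\in M_{c_2}$, arguing that if $w_1=w_2=w$ then Lemma~\ref{amclaim2} gives $w=m(x_e)$, ``whence $x_ew=e$ has colour $c(e)$, contradicting $c_2\neq c(e)$.'' This last step is false in a multigraph: the \emph{pair} $x_em(x_e)$ is the vertex pair of $e\in M$, but that pair can simultaneously carry a $c_2$-coloured edge with $c_2\neq c(e)$ — nothing in the hypotheses (and in particular nothing in Lemma~\ref{amclaim2}, which only controls edges at $v_e$) forbids $x_em(x_e)$ from being $c_2$-coloured. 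So $w_1=w_2=m(x_e)$ can occur, the proposed rainbow matching $\{v_ew_1,x_ew_2\}$ degenerates (both edges hit $m(x_e)$), and Lemma~\ref{lema} cannot be invoked. This failure mode is precisely what the paper's switch to the matching $L$ sidesteps: in the paper, $x_e\notin V(L)$, so the double-count never has to disentangle $v_e$-edges from $x_e$-edges at the same outside vertex $m(x_e)$. Finally, as you acknowledge, the ``bad case'' $C_{x_e}\subseteq\{c_f,c(e)\}$ is only sketched, but even granting that sketch, the good case does not close, so the proof as written does not go through. I would recommend adopting the paper's phantom matching $L$ as the underlying object of the double-count rather than $\tilde M$; it eliminates the per-edge case analysis entirely.
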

\begin{proof}
Let $C'$ be the set of at least $0.9n$ many colours mentioned in the statement, let $M$ be a maximal rainbow matching in $G$ and for contradiction sake, suppose that $|M| \leq n-r-1$. By Corollary \ref{corollary}, we know that provided that $r$ is sufficiently large and $n$ is sufficiently large in terms of $r$, there is a $9$-auxiliary matching $N$ for $M$ of size at least $0.9n$ such that $N_{1/4} = \emptyset$. By deleting at most $0.1n$ edges from $M_N$ whose colours are not in $C'$, we
can further redefine $N$ so that $C_N \subseteq C'$ and still $|N| \geq 0.8n> 2n/3$. Then, recalling the definition of $N_{1/4}$ and taking into account the previous modification made to $N$, we now have that for every $e \in M_N$, the edge $v_e m(x_e)$ is repeated in at least $(1/4) \cdot 0.9n - 0.1n \geq 0.1n$ many colours in $C_N$. Now, define the matching $L := \{v_e m(x_e): e \in M_N\} \cup (M \setminus M_N)$ and note the following (the reader might want to refer to Figures \ref{fig4} and \ref{fig5} for an illustration).
\begin{claim*}\label{claim42}
There is no $C_N$-coloured edge contained in $V \setminus V(L)$ and $L$ contains no $C_N$-rainbow horn.
\end{claim*} 
\begin{proof}
For the first part, let $e'$ be a $C_N$-coloured edge contained in $V \setminus V(L)$ and let $e_1, \ldots, e_i$ denote the edges of $M_N$ which intersect $e'$. By the condition we have on the edges $v_e m(x_e)$ (for $e \in M_N$) and since $i \leq 2$, we can pick distinct colours in $C_N \setminus \{c(e')\}$ for the edges $v_{e_1} m(x_{e_1}), \ldots, v_{e_i} m(x_{e_i})$ so that 
$\{e', v_{e_1} m(x_{e_1}), \ldots, v_{e_i} m(x_{e_i})\}$ is a rainbow matching of size $i+1$ contradicting Lemma \ref{lema} with $M'=\{e_1, \dots, e_i\} \subseteq M$.

For the second part, suppose for some $e \in M_N$, there exist distinct vertices $z_1,z_2 \notin V(L)$ such that the edges $v_e z_1$ and $m(x_e) z_2$ are distinctly coloured in $C_N$. Let $c_1,c_2$ denote these colours. Let also $e_1, \ldots, e_i$ denote the edges of $M_N$ which intersect $\{z_1,z_2\}$. As before, since $i \leq 2$, we can pick distinct colours in $C_N \setminus \{c_1,c_2\}$ for the edges $v_{e_1}m(x_{e_1}), \ldots, v_{e_i}m(x_{e_i})$ so that $\{v_e z_1,m(x_e) z_2,v_{e_1}m(x_{e_1}), \ldots, v_{e_i}m(x_{e_i})\}$ is a rainbow matching of size $i + 2$ disjoint to the vertices in $V(M \setminus \{e,e_1, \ldots, e_i\}) \cup \{v_{f} : f \in M_N \setminus \{e,e_1, \ldots, e_i\}\} \cup \{x_{f} : f \in M_N \setminus \{e,e_1, \ldots, e_i\}\}$. Since $N$ is a $9$-auxiliary matching,
this contradicts Lemma \ref{lema} for $M'=\{e, e_1, \dots, e_i\} \subset M$ and $t=9$. A similar analysis can be done when the horn is an edge in $M \setminus M_N$.
\end{proof}

\begin{figure}[h]
\RawFloats
\begin{minipage}[t]{0.5\textwidth}
\centering
\captionsetup{width=\textwidth}
\begin{tikzpicture}[scale =1, xscale=1, yscale=0.9]

\node[scale=1] (M) at(-1.1,0.1) {$e'$};

\node[scale=1] (M) at(0,0.7) {$e_1$};

\node[scale=1] (M) at(0,-1.3) {$e_2$};

\draw (-0.8,1) -- (1,1);
\draw (-0.8,1) -- (1,1);
\draw (-0.8,-1) -- (1,-1);
\draw (-0.8,-1) -- (1,-1);
\draw[blue!100] (-0.8,1) -- (-0.8,-1);
\draw[blue!100] (-0.8,1) -- (-0.8,-1);
\draw[blue!100] (-0.8,1) -- (-0.8,-1);
\draw[blue!100] (-0.8,1) -- (-0.8,-1);

\path[gray!100] (-1.8,-1) edge [bend left=35] (1,-1);
\path[gray!100] (-1.8,-1) edge [bend left=15] (1,-1);
\path[gray!100] (-1.8,-1) edge [bend left=25] (1,-1);
\path[gray!100] (-1.8,-1) edge [bend left=35] (1,-1);
\path[gray!100] (-1.8,-1) edge [bend left=15] (1,-1);
\path[gray!100] (-1.8,-1) edge [bend left=25] (1,-1);

\path[gray!100] (-1.8,1) edge [bend left=35] (1,1);
\path[gray!100] (-1.8,1) edge [bend left=15] (1,1);
\path[gray!100] (-1.8,1) edge [bend left=25] (1,1);
\path[gray!100] (-1.8,1) edge [bend left=35] (1,1);
\path[gray!100] (-1.8,1) edge [bend left=15] (1,1);
\path[gray!100] (-1.8,1) edge [bend left=25] (1,1);

\draw[red!100] (-1.8,-1) -- (-0.8,-1); 
\draw[red!100] (-1.3,-1) ellipse (0.5 and 0.08); 
\draw[red!100] (-1.8,1) -- (-0.8,1); 
\draw[red!100] (-1.3,1) ellipse (0.5 and 0.08); 

\draw[white!100] (-0.7,-2.5) -- (0,-2.5);;

\node[scale=3] at(-0.8,1) {$.$};
\node[scale=3] at(-0.8,-1) {$.$};

\end{tikzpicture}
 
\caption{A $C_N$-coloured edge outside of $L$.}
\label{fig4}
\end{minipage}\hfill
\begin{minipage}[t]{0.5\textwidth}
\centering
\captionsetup{width=1\textwidth}
\begin{tikzpicture}[scale = 0.95, xscale=1, yscale=0.9]

\draw (-0.8,2) -- (1,2);
\draw (-0.8,2) -- (1,2);
\draw (-0.8,0) -- (1,0);
\draw (-0.8,0) -- (1,0);
\draw (-0.8,-2) -- (1,-2);
\draw (-0.8,-2) -- (1,-2);

\node[scale=1] at(0,-2.3) {$e$};
\node[scale=0.8] at(-0.65,-0.35) {$z_1$};
\node[scale=0.8] at(-0.95,1.65) {$z_2$};
\node[scale=0.8] at(-1.8,-2.35) {$v_e$};
\node[scale=0.8] at(1,-2.35) {$m(x_e)$};

\path[gray!100] (-1.8,-2) edge [bend left=35] (1,-2);
\path[gray!100] (-1.8,-2) edge [bend left=15] (1,-2);
\path[gray!100] (-1.8,-2) edge [bend left=25] (1,-2);
\path[gray!100] (-1.8,-2) edge [bend left=35] (1,-2);
\path[gray!100] (-1.8,-2) edge [bend left=15] (1,-2);
\path[gray!100] (-1.8,-2) edge [bend left=25] (1,-2);

\path[gray!100] (-1.8,0) edge [bend left=35] (1,0);
\path[gray!100] (-1.8,0) edge [bend left=15] (1,0);
\path[gray!100] (-1.8,0) edge [bend left=25] (1,0);
\path[gray!100] (-1.8,0) edge [bend left=35] (1,0);
\path[gray!100] (-1.8,0) edge [bend left=15] (1,0);
\path[gray!100] (-1.8,0) edge [bend left=25] (1,0);

\path[gray!100] (-1.8,2) edge [bend left=35] (1,2);
\path[gray!100] (-1.8,2) edge [bend left=15] (1,2);
\path[gray!100] (-1.8,2) edge [bend left=25] (1,2);
\path[gray!100] (-1.8,2) edge [bend left=35] (1,2);
\path[gray!100] (-1.8,2) edge [bend left=15] (1,2);
\path[gray!100] (-1.8,2) edge [bend left=25] (1,2);

\draw (-0.8,2) -- (1,2);
\draw (-0.8,2) -- (1,2);
\draw (-0.8,0) -- (1,0);
\draw (-0.8,0) -- (1,0);
\draw (-0.8,-2) -- (1,-2);
\draw (-0.8,-2) -- (1,-2);

\draw[red!100] (-1.8,2) -- (-0.8,2); 
\draw[red!100] (-1.3,2) ellipse (0.5 and 0.08); 
\draw[red!100] (-1.8,0) -- (-0.8,0); 
\draw[red!100] (-1.3,0) ellipse (0.5 and 0.08); 
\draw[red!100] (-1.8,-2) -- (-0.8,-2); 
\draw[red!100] (-1.3,-2) ellipse (0.5 and 0.08);

\draw[green!=100] (-1.8,-2) -- (-0.8,0);
\draw[green!=100] (-1.8,-2) -- (-0.8,0);
\draw[green!=100] (-1.8,-2) -- (-0.8,0);
\draw[green!=100] (-1.8,-2) -- (-0.8,0);
\draw[blue!=100] (1,-2) -- (-0.8,2);
\draw[blue!=100] (1,-2) -- (-0.8,2);
\draw[blue!=100] (1,-2) -- (-0.8,2);
\draw[blue!=100] (1,-2) -- (-0.8,2);

\node[scale=3] at(-0.8,2) {$.$};
\node[scale=3] at(-0.8,0) {$.$};
\node[scale=3] at(1,-2) {$.$};
\node[scale=3] at(-1.8,-2) {$.$};

\end{tikzpicture}
\caption{A $C_N$-rainbow horn in $L$.}
\label{fig5}
\end{minipage}
\end{figure}

\noindent Let now $c \in C_N \subseteq C'$. Note that since it has at least $3(n-r)-2 \geq 3|L|+1$ vertices in its colour class, by the claim above and Lemma \ref{triangles}, there must exist a $c$-coloured matching $L_c \subseteq E[V(L), V \setminus V(L)]$ of size $|L|+1$. Also, using the claim above and Observation \ref{observ}, we must have that any edge $e' \in L$ is a $c$-horn (in $L$) for at most two colours $c \in C_N$, and thus, in general, every edge in $L$ intersects at most $|C_N|+2$ many edges belonging to $\bigcup_{c \in C_N} L_c$. Furthermore, suppose that $e' \in L$ is of the form $e' = v_e m(x_e)$, for some $e \in M_N$. If it is a $c$-horn (in $L$) for some $c \in C_N$, then recall that $e'$ is repeated in at least $n/10-1 > n/20$ many colours $c' \in C_N \setminus \{c\}$. Since each such colour $c'$ is a disjoint union of non-trivial cliques, no edge of $L_{c'}$ can intersect $e'$, otherwise this would imply that $e'$ is a $C_N$-rainbow horn (with the two implicit colours being $c,c'$), which contradicts the above claim. Hence, the edge $e'$ intersects at most $2+|C_N| - n/20 \leq |C_N|$ many edges from $\bigcup_{c \in C_N} L_c$. Trivially, if $e'$ is not a $c$-horn for any $c \in C_N$, this also holds. Therefore, by double-counting we must have
$$|C_N| (|L|+1) = \sum_{c \in C_N} |L_c| \leq |M_N| \cdot |C_N| + |M \setminus M_N| \cdot (|C_N| + 2)$$
implying that $|C_N| \leq 2|M \setminus M_N| \leq 2n - 2|C_N|$, which is a contradiction since $|C_N| = |N|> 2n/3$. 
\end{proof}
Following the ideas briefly discussed in the outline of the proof,  we can now establish our main result. 
\begin{proof}[ of Theorem \ref{mainthm}]
Let $G$ be a $(n,3n-2)$-multigraph. Let $r$ be sufficiently large and $n$ sufficiently large in terms of $r$, so that in particular, Lemma \ref{keylemma} holds. Note that we can iteratively apply Theorem \ref{strongboundedmult} with $\delta = 1/10r$ in order to find a matching $L$ of size $r$ such that each edge in it is repeated in at least $(1-1/10r)n$ many colours. Let $C'$ be the set of colours which are repeated in all edges of $L$ and note that by a union bound, we have $|C'| \geq 9n/10$. Since each colour class in $G$ is a disjoint union of triangles and edges, $G':= G - V(L)$ is now a $(n,3n-10r)$-multigraph such that every colour in $C'$ has at least $3(n-r)-2$ vertices in its colour class. Therefore, Lemma \ref{keylemma} implies that $G'$ has a rainbow matching of size at least $n-r$. 

In order to finish the proof, we only need to show that there exists a maximal rainbow matching $M$ in $G'$ such that the colours not used in it are contained in $C'$. Indeed, if this is the case, since $|M| \geq n-r$, we can then pick the colours of the edges of $L$, so that $M \cup L$ contains a rainbow matching of size $n$, and we are done. Take then a maximal rainbow matching $M'$ in $G'$ and suppose there is some colour $c \notin C'$ which is not used in $M'$. Since $M'$ is maximal, there is no $c$-coloured edge in $G'$ which is contained outside $M'$ and thus, by Lemma \ref{triangles}, there is a $c$-coloured matching in $E[V(M'), V(G') \setminus V(M')]$ of size $3n-10r-|M'| \geq n-10r > 2n/3$. In particular, this matching then intersects at least $n/3$ edges of $M'$ - and for each such edge, note we can switch it with the edge in the $c$-coloured matching which touches it. Thus, there exist at least $n/3$ colours $c'$ which are used in $M'$ for which there is a maximal rainbow matching $M''$ in $G'$ such that $C(M'') = (C(M') \setminus \{c'\}) \cup \{c\}$. As $|C'| \geq 9n/10$, there is always such a colour $c'$ which also belongs to $C'$. We can now take $M''$ and repeat the same operation until we have a maximal rainbow matching $M$ in $G'$ such that all the colours not used in it belong to $C'$.
\end{proof}

\section{Concluding remarks}
Although we have resolved the Grinblat problem for all sufficiently large $n$, there are still some related open questions which could be of interest. Firstly, one might want to understand what happens for small $n \geq 4$. Our proof yields the validity of Grinblat's conjecture starting with moderately large $n$, but it seems plausible that some of our ideas could help in this direction. Secondly, one can consider stability-type questions: Is the example consisting of a disjoint union of $n-1$ triangles repeated in each of the $n$ colours the only $(n,3n-3)$-multigraph without a rainbow matching of size $n$? How close to this example are all $(n,v)$-multigraphs with no rainbow matching of size $n$ and $v$ close to $3n$? 

Finally, another interesting direction is to gain a better understanding of how Grinblat's problem varies with edge-multiplicity restrictions. Precisely, one would look to answer the following question.
Given $n$ and $m \leq n$, what is the minimal $v_m$ such that every $(n,v_m)$-multigraph with edge-multiplicity at most $m$ contains a rainbow matching of size $n$?
The difficulty of determining or estimating the value $v_m$ will vary with $m$. For example, note that Theorem \ref{mainthm} states that $v_n = 3n-2$, whereas determining $v_1$ seems more difficult. The first author together with Yepremyan \cite{correia2020full} showed that $v_1 = 2n + o(n)$. This might suggest that $v_1 = 2n + c$ for some constant $c$. However, recall that the famous Ryser-Brualdi-Stein conjecture can be formulated as stating that every properly $n$-edge-coloured $K_{n,n}$ contains a rainbow matching of size $n-1$. Let $G$ be the graph formed by taking a disjoint union of $c/2$ stars of size $n$, each having edges of all the $n$ colours. Note that the graph formed by the disjoint union of $G$ and a properly $n$-edge-coloured $K_{n,n}$ is a $(n,2n+c)$-multigraph with multiplicity at most $1$. Therefore, if one proves that $v_1 = 2n+c$, then there exists a rainbow matching of size $n$ in this graph. This will imply that $K_{n,n}$ contains a rainbow matching of size $n-c/2$, which would greatly improve on the best known bound for the Ryser-Brualdi-Stein conjecture from \cite{keevash2020new}.

Recently, the two authors together with Pokrovskiy \cite{CPS2021+} showed that $v_{m} \leq 2n + 2m + O(n/(\log n)^{1/4})$. This substantially improves the result in \cite{correia2020full} and is asymptotically tight for $m = \eps n$ and $\log^{-1/4} n \ll \eps \ll 1$. Indeed, one can construct examples (see \cite{CPS2021+}) of $(n,2n+2\eps n - O(\eps^2 n))$-multigraphs with edge-multiplicity at most $\eps n$ and no rainbow matching of size $n$. However, this result does not give much information about what occurs when $m$ is very small, neither when $m$ is close to $n$.

\vspace{0.4cm}
\noindent
{\bf Acknowledgements.} The authors would like to thank Alexey Pokrovskiy for stimulating discussions on the topic. The first author would also like to thank Liana Yepremyan for introducing him to the problem and for valuable discussions.

\end{document}